\newtheorem{theorem}{Theorem}
\newtheorem{lem}{Lemma}
\newtheorem{example}{Example}
\begin{document}

\title{A stochastic comparison study for the smallest and largest ordered statistic from Weibull-G and Gompertz Makeham distribution}

\author{Madhurima Datta and Nitin Gupta\\
Indian Institute of Technology Kharagpur}

\address{Department of Mathematics, Indian Institute of Technology Kharagpur}

\email[M. Datta]{ madhurima92.datta@iitkgp.ac.in}
 \email[N. Gupta]{ nitin.gupta@maths.iitkgp.ac.in}
 
\maketitle
\begin{abstract}
In this paper, we have discussed the stochastic comparison of the smallest and largest ordered statistic from independent heterogeneous Weibull-G random variables and Gompertz Makeham random variables. We compare systems arising from taking different model parameters and obtain stochastic ordering results under the condition of multivariate chain majorization. Using the notion of vector majorization, we compare different systems and obtain stochastic ordering results.
\end{abstract}

\textit{Keywords} Multivariate chain majorization, Weibull-G distribution, Gompertz Makeham distribution, Order statistics.\\

MSC 62N05, 90B25

\section{Introduction}
Order statistics have been widely studied from the past few decades. It is the arrangement of random variables according to their strength. Let $X_1, \ldots, X_n$ be a set of random variables then the increasing arrangement of these random variables is $X_{1:n} \leq X_{2:n} \ldots \leq X_{n:n}$, here $X_{1:n}$ is the smallest ordered statistic and $X_{n:n}$ is the largest ordered statistic. The terms in the form of $X_{k:n}$ are the k-th ordered statistic. An interpretation of these ordered statistic can be observed in terms of system whose components are these n random variables. The smallest ordered statistic is the minimum of all the n random variables, i.e., $X_{1:n} = \min(X_1, X_2, \ldots, X_n)$, this represents the structure of a series system. Similarly the largest ordered statistic $X_{n:n} = \max(X_1, X_2, \ldots, X_n)$ represents the parallel system and $X_{k:n}$ represents the $n-k+1$ out of $n$ system. The parallel and series systems are the primary units of complex coherent systems. Moreover, they are often used in statistics, applied probability, auction theory, actuarial science etc. \\
In this paper we shall focus on the study of stochastic comparison of smallest and largest ordered statistic whose components follow Weibull- G distribution and Gompertz Makeham distribution. Weibull and Gompertz Makeham distributions are widely used to study the nature of human death distribution as they contain minimum number of parameters and the distribution functions are in closed form. Both of them are limit distributions of extreme value theory and a comparative analysis of both the distributions have been studied by \cite{15}. Firstly let us understand how the Weibull-G distribution evolved. The distribution originated from T-X (Transformed transformer) family of distributions, proposed by Alzaatreh et al.\cite{6} as a new method of generating continuous distributions. The T-X family of distributions can fit highly left tailed, right tailed, unimodal, bimodal distributions. This family of distributions was greatly inspired from the Beta- Normal distributions proposed by Eugene et al.\cite{5}. Let $X$ and $T$ be two continuous random variables such that the density function, distribution function(cdf) and survival function(sf) of $X$ are $f(x), F(x), \overline{F}(x)$ respectively, and $T$ has support on $[a, \infty)$ with density function $u(x)$. Let $X_{T}$ be a random variable following T-X family of distributions, the cdf of $X_{T}$ is
\begin{equation}\label{TX}
H_{TX}(x)= \int_a^{w(F(x))}u(t) dt, x \in \mathbb{R},
\end{equation}
where $w(F(x))$ is continuously differentiable and monotonically non-decreasing in the interval $[a, \infty)$. Let $X$ follow a scale family of distribution say $X \sim F(\gamma x)$, $\gamma >0$ is the scale parameter. The Weibull-G distribution is generated when the random variable $T$ follows the Weibull distribution ($T \sim W(\alpha, \beta)$, $\alpha >0, \beta >0$) with parameters $\alpha$ and $\beta$ and the function $w(F(x)) = \dfrac{F(x)}{1-F(x)}$ in $\eqref{TX}$. The resulting distribution function of $X_{WG}$ is
$$H_{WG}(x) = 1- e^{-\alpha\left(\dfrac{F(\gamma x)}{1- F(\gamma x)}\right)^{\beta}}, ~x >0, \alpha >0, \beta >0, \gamma>0,$$
here $\alpha, \gamma$ are the scale parameters and $\beta$ is the shape parameter. We shall denote the distribution as $W-G(\alpha, \beta, \gamma)$.\\
The generalised distributions are extremely useful for practical purposes as they have more number of parameters. The stochastic comparison of such distributions are also necessary. Recently Chowdhury et al.\cite{n} have studied stochastic comparison of the smallest ordered statistic when the parameters are vector majorized. In this paper we shall use the same notations as used in Chowdhury et al.\cite{n} for convenience. The Weibull-G distribution is an advanced distribution that can obtain the odds that an individual will die prior to time $X$, with cdf $F$, where these odds follow another life distribution $T$ as observed by Cooray \cite{4}. Bourguignon et al.\cite{11} have studied the mathematical properties of Weibull-G distribution and observed that this distribution nicely fits real data sets. \\

The Gompertz Makeham distribution evolved from the well known Gompertz law and Makeham law respectively, provided by two researchers \cite{12} and \cite{13} in the 19th century. It was observed that the death rate at any age is the sum of two terms in which one term contributes as age-independent (Makeham term) and the other term is the age dependent term (Gompertz term). \cite{14}, \cite{15}, \cite{16}, \cite{17}, \cite{18} have studied various interesting properties of Gompertz Makeham (GM) distribution. Let $X$ be a continuous random variable following GM distribution with positive parameters $\alpha, \beta, \lambda$, we shall denote this as $GM(\alpha, \beta, \lambda)$. The distribution function of $X$ is 
\begin{equation}
F_{GM}(x) = 1- e^{-\lambda x - \dfrac{\alpha}{\beta}(e^{\beta x}-1)}, x >0,  \alpha >0, \beta >0, \lambda >0.
\end{equation}
Here the parameter $\alpha$ denotes the initial mortality and $\beta$ denotes the increase in mortality with increasing age, both of them are Gompertz term whereas the parameter $\lambda$ is a Makeham term and it denotes the risk of death due to unexpected causes such as accidents, infections, weather conditions etc. In recent times, few extensions of Gompertz Makeham distribution are available such as the bivariate Gompertz Makeham life distribution \cite{20} and the transmuted Gompertz Makeham distribution \cite{21}, \cite{22}. In this paper we shall study the stochastic comparison results for series and parallel systems comprising of n components (say) where each component follow GM distribution.
 Many researchers have studied the stochastic comparison of Weibull distributions, Li and Li \cite{10}, Torrado and Kochar \cite{9} are among them. Fang and Zhang\cite{7} have compared two parallel systems comprising of Exponentiated Weibull components in terms of usual stochastic, dispersive and likelihood ratio ordering. Later Kundu and Chowdhury\cite{1} also studied parallel systems whose components follow Exponentiated Weibull distribution with respect to reversed hazard rate and likelihood ratio ordering. The above mentioned studies were carried out for vector- majorized parameters. The univariate majorization arises during income allocation or in comparison of only one attribute. In real life situations when allocation of more than one attribute are compared, one can use multivariate majorization. Studies have been conducted for multivariate chain majorization by various researchers. Fang and Balakrishnan\cite{3} studied Exponential-Weibull distribution and obtained usual stochastic and hazard rate ordering for smallest ordered statistic with vector majorized components and usual stochastic ordering of largest ordered statistic when the components were chain majorized. Biswas and Gupta\cite{8} observed usual stochastic ordering for parallel system with Exponentiated Gumbel Type-II distributed components for chain majorized parameters. \cite{23} observed various stochastic ordering for series and parallel systems whose components follow Kumaraswamy's and Frechet distribution. Moreover \cite{KS} obtained several stochastic ordering results for Kumaraswamy-G distributed components. Recently \cite{24} observed stochastic orderings for Extended Inverse Lindley distribution with heterogeneous components. The stochastic orders mentioned here are available in details in the book by Shaked and Shantikumar \cite{2b}. Some of the orders that have been used in the paper are mentioned in the next section. \\
In this paper we have discussed the stochastic comparison of the smallest and largest ordered statistic from independent heterogeneous Weibull-G random variables. Let $X_1, X_2, \ldots, X_n$ be independent random variables with $X_i \sim W-G(\alpha_i, \beta_i, \gamma_i), i=1,2,\ldots, n$. Furthermore, let $Y_1, Y_2, \ldots, Y_n$ be another set of independent random variables with $Y_i \sim W-G(\alpha_i^*, \beta_i^*, \gamma_i^*), i=1,2,\ldots, n$. When $\beta_1 = \beta_2 = \ldots = \beta_n= \beta_1^* = \beta_2^* = \ldots = \beta_n^*$ and the matrix containing the parameters $\alpha_i, \gamma_i$ changes to another matrix containing the parameters $\alpha_i^*, \gamma_i^*, i=1,2, \ldots, n $, in the sense of multivariate chain majorization, we study the hazard rate ordering of the smallest ordered statistic. Next, when $\beta_1 = \beta_2 = \ldots = \beta_n= \beta_1^* = \beta_2^* = \ldots = \beta_n^*$ and $\gamma_1 = \gamma_2 = \ldots = \gamma_n= \gamma_1^* = \gamma_2^* = \ldots = \gamma_n^*$ and $(\alpha_1, \alpha_2,\ldots, \alpha_n) \prec_{w} (\alpha_1^*, \alpha_2^*,\ldots, \alpha_n^*)$, we establish reversed hazard rate ordering of the largest ordered statistic. Also, when $\alpha_1 = \alpha_2 = \ldots = \alpha_n= \alpha_1^* = \alpha_2^* = \ldots = \alpha_n^*$, $\beta_1 = \beta_2 = \ldots = \beta_n= \beta_1^* = \beta_2^* = \ldots = \beta_n^*$ and $(\gamma_1, \gamma_2, \ldots, \gamma_n) \prec^w (\gamma_1^*, \gamma_2^*, \ldots, \gamma_n^*)$, we observe the usual stochastic ordering of the largest ordered statistic when the baseline distribution of $X$ is Exponential.\\
The paper is organized as follows: The definitions of various terms used in the paper are mentioned in Section 2. The results for smallest ordered statistic with chain majorized parameters and for largest ordered statistic with vector majorized parameters are discussed in Section 3 and 4. The conclusion is given in Section 5.

\section{Definitions}

We consider $X$ and $Y$ to be two absolutely continuous random variables with distribution functions $F(x)$ and $G(x)$; survival functions as $\overline{F}(x)$ and $\overline{G}(x)$; probability density functions as $f(x)$ and $g(x)$.
Then the two random variables can be compared with each other with the help of various stochastic orders such as:
\begin{enumerate}
\item $X$ is smaller than $Y$ in the usual stochastic order ($X \leq_{st} Y$) if and only if $$\overline{F}(x) \leq \overline{G}(x) ~\forall x \in (-\infty, \infty).$$

\item $X$ is smaller than $Y$ in hazard rate order ($X \leq_{hr} Y$) if and only if 
 $$\dfrac{\overline{G}(x)}{\overline{F}(x)}  \text{ increases in }x \in (-\infty,max(u_X,u_Y))$$ where $u_X$ and $u_Y$ are the right end-points of the supports of $X$ and $Y$ respectively.

\item $X$ is smaller than $Y$ in the reversed hazard rate order $X \leq_{rh} Y$ if and only if 
$$\dfrac{G(x)}{F(x)} \text{ increases in}~ x \in (\min(l_X,l_Y),\infty)$$ where $l_X$ and $l_Y$ are the left end-points of the supports of $X$ and $Y$ respectively.
\item $X$ is smaller than $Y$ in the likelihood ratio order ($X \leq_{lr}Y$) if and only if $$\dfrac{g(x)}{f(x)} \text{ increases in x over the union of the supports of X and Y.}$$

\end{enumerate}

 The following concept of majorization is available in Marshall et al.(\cite{2}).
 \subsection{Vector Majorization}
Consider two n-dimensional real valued vectors $\underline{a} = (a_1, \ldots, a_n)$ and $\underline{b} = (b_1, \ldots, b_n)$, such that they are arranged as $a_{1:n} \leq a_{2:n} \leq \ldots \leq a_{n:n}$ and $b_{1:n} \leq b_{2:n} \leq \ldots \leq b_{n:n}$. Then the vector 
\begin{enumerate}
\item $\underline{a}$ is \emph{majorized} by $\underline{b}$   ( $\underline{a} \prec \underline{b}$ ) if
\begin{equation}
 \sum_{i=1}^{n}a_{i:n} = \sum_{i=1}^{n}b_{i:n} ~\text{and}~ \sum_{i=1}^{k}a_{i:n} \geq \sum_{i=1}^{k}b_{i:n} ~\forall~k = 1,\ldots, n-1;
 \end{equation}
\item $\underline{a}$ is \emph{weakly submajorized} by $\underline{b}$ ( $\underline{a} \prec_{w} \underline{b}$ ) if
\begin{equation}
  \sum_{i=1}^k a_{n-i+1:n} \leq \sum_{i=1}^k b_{n-i+1:n} ~\forall~ k = 1, \ldots, n;
\end{equation}
\item $\underline{a}$ is \emph{weakly supermajorized} by $\underline{b}$ ( $\underline{a} \prec^{w} \underline{b}$ ) if
\begin{equation}
\sum_{i=1}^k a_{i:n} \geq \sum_{i=1}^k b_{i:n} ~\forall~ k = 1, \ldots, n;
\end{equation}
\end{enumerate}
Clearly, $\underline{a} \prec^{w} \underline{b} \Leftarrow \underline{a} \prec \underline{b} \Rightarrow \underline{a} \prec_{w} \underline{b}$.

\subsection{Multivariate chain Majorization}
A permutation matrix $\Pi$ is a square matrix where each row and column has exactly one entry $``1"$ and all the other entries are zero. The identity matrix of order $n$ is a permutation matrix and thus interchanging its rows and columns one can obtain $n!$ permutation matrices. A T-transform matrix is of the form 
$$T^{\lambda}_{i,j} = \lambda I_n + (1-\lambda)\Pi_{i,j},$$
where $0 \leq \lambda \leq 1$, $\Pi_{i,j}$ is a $n \times n$ permutation matrix that interchanges the i-th row with the j-th row and $I_n$ is the nth order identity matrix. Let $T^{\lambda_1}_{i,j} = \lambda_1 I_n + (1-\lambda_1)\Pi_{i,j}
^{(1)}$ and $T^{\lambda_2}_{i,j} = \lambda_2 I_n + (1-\lambda_2)\Pi_{i,j}
^{(2)}$ be two T-transform matrices, where $0 \leq \lambda_1, \lambda_2\leq 1$. If 
$\Pi_{i,j}^{(1)} = \Pi_{i,j}^{(2)}$, then $T^{\lambda_1}_{i,j}$ and $T^{\lambda_2}_{i,j}$ have the same structure otherwise they are different. A square matrix of order n, $Q= \{q_{uv}\}$ is doubly stochastic if $\displaystyle\sum_{u=1}^n q_{uv} = 1$, $\forall v=1,2,\ldots,n$ and $\displaystyle\sum_{v=1}^n q_{uv} = 1$, $\forall u=1,2,\ldots,n$. 

Consider two $m \times n$ matrices $A=\{a_{ij}\}$ and $B= \{b_{ij}\}$, the respective rows are $a_1^{R}, \ldots, a_m^{R}$ and $b_1^{R}, \ldots, b_m^{R}$. Then 
\begin{enumerate}
\item $A$ is said to chain majorize $B$ ($ A \gg B$), if there exists a finite set of $n \times n$ T-transform matrices $T^{\lambda_1}, \ldots, T^{\lambda_k}$ such that $B = A T^{\lambda_1}, \ldots, T^{\lambda_k} $;
\item $A$ is said to majorize $B$ ($A \succ B$), if there exists a $n\times n$ doubly stochastic matrix $Q$ such that $B= AQ$.
\end{enumerate}
 Since product of T-transforms is a doubly stochastic matrix thus 
 $$ A \gg B \Rightarrow A \succ B.$$
 When $m \geq 2$ and $n \geq 3$, majorization does not imply chain majorization.

\subsection{Schur-convexity (Schur-concavity)}
A real valued function $\psi$ defined on a subset of $\mathbb{R}^n$ is \emph{Schur-convex (Schur-concave)} if
\begin{equation}
\underline{a} \prec \underline{b} \Rightarrow \psi(\underline{a}) ~\leq (\geq)~ \psi(\underline{b}),
\end{equation}
where $\underline{a} = (a_1, \ldots, a_n)$ and $\underline{b} = (b_1, \ldots, b_n)$ are two real valued vectors.

\begin{lem}[Theorem 3.A.4, see {Marshall et al.(1979)\cite{2}}]
Let $A \subset \mathbb{R}$ be an open interval , a function $\psi: A^n \rightarrow \mathbb{R}$ be continuously differentiable then $\psi$ is Schur-convex (Schur-concave) on $A^n$ if and only if $\psi$ is symmetric on $A^n$, and
\begin{equation*}
(a_i - a_j)\left(\dfrac{\partial \psi(\underline{a})}{\partial a_i} - \dfrac{\partial \psi(\underline{a})}{\partial a_j}\right) \geq (\leq) ~ 0 ~~~~\forall \underline{a} \in A^n,
\end{equation*}
where $\underline{a} = (a_1,\ldots, a_n)$.
\end{lem}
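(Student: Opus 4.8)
This is the classical Schur--Ostrowski criterion, so my plan is to establish the two implications of the ``if and only if'' separately, carrying out the argument for the Schur-convex case (the inequalities read $\geq$ then $\leq$); the Schur-concave case then follows verbatim by applying the result to $-\psi$. Throughout I write $\partial_{a_i}\psi$ for $\tfrac{\partial \psi(\underline{a})}{\partial a_i}$.

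\textbf{Necessity.} Assume $\psi$ is Schur-convex. For symmetry, observe that for any permutation matrix $\Pi$ the vectors $\underline{a}$ and $\underline{a}\,\Pi$ have identical decreasing rearrangements, so $\underline{a} \prec \underline{a}\,\Pi$ and $\underline{a}\,\Pi \prec \underline{a}$ hold simultaneously; Schur-convexity then forces $\psi(\underline{a}) = \psi(\underline{a}\,\Pi)$, i.e.\ $\psi$ is symmetric. For the derivative inequality, fix $\underline{a}$ and indices $i,j$; if $a_i = a_j$ the first factor vanishes and there is nothing to prove, so take $a_i > a_j$. For small $t \geq 0$ let $\underline{a}(t)$ be $\underline{a}$ with its $i$-th and $j$-th entries replaced by $a_i - t$ and $a_j + t$. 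As long as $a_i - t \geq a_j + t$ this is a Pigou--Dalton (Robin Hood) transfer, so $\underline{a}(t) \prec \underline{a}$ and hence $\psi(\underline{a}(t)) \leq \psi(\underline{a})$. Since $\psi$ is continuously differentiable, $\tfrac{d}{dt}\psi(\underline{a}(t))\big|_{t=0^{+}} = -\,\partial_{a_i}\psi + \partial_{a_j}\psi \leq 0$, and multiplying through by $a_i - a_j > 0$ gives $(a_i - a_j)\bigl(\partial_{a_i}\psi - \partial_{a_j}\psi\bigr) \geq 0$, as required.

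\textbf{Sufficiency.} Now assume $\psi$ is symmetric and satisfies the derivative condition; I must show $\underline{a} \prec \underline{b} \Rightarrow \psi(\underline{a}) \leq \psi(\underline{b})$. By symmetry I may replace each vector by its decreasing rearrangement and so assume $\underline{a}$ and $\underline{b}$ are sorted. The heart of the matter is a single-transfer estimate: if $\underline{c}$ arises from $\underline{d}$ by one T-transform $T^{\lambda}_{i,j}$ (with $\lambda \in [\tfrac12,1]$, acting on coordinates $i,j$ with $d_i \geq d_j$ so that the two entries are brought closer without crossing), then $\psi(\underline{c}) \leq \psi(\underline{d})$. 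To see this, parametrize $\underline{d}(\mu) = \underline{d}\,T^{\mu}_{i,j}$, whose affected coordinates are $\mu d_i + (1-\mu)d_j$ and $\mu d_j + (1-\mu)d_i$, with difference $(2\mu-1)(d_i - d_j)$. A direct computation gives $\tfrac{d}{d\mu}\psi(\underline{d}(\mu)) = (d_i - d_j)\bigl(\partial_{a_i}\psi - \partial_{a_j}\psi\bigr)$ evaluated at $\underline{d}(\mu)$. Applying the hypothesis at the point $\underline{d}(\mu)$ shows $(2\mu-1)(d_i - d_j)\bigl(\partial_{a_i}\psi - \partial_{a_j}\psi\bigr) \geq 0$, so for $\mu \in (\tfrac12,1]$ the derivative is $\geq 0$ (the endpoint $\mu=\tfrac12$ and the degenerate case $d_i = d_j$ being handled by continuity). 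Hence $\mu \mapsto \psi(\underline{d}(\mu))$ is nondecreasing on $[\tfrac12,1]$, giving $\psi(\underline{c}) = \psi(\underline{d}(\lambda)) \leq \psi(\underline{d}(1)) = \psi(\underline{d})$.

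\textbf{Conclusion and main obstacle.} I then invoke the classical Hardy--Littlewood--P\'olya/Muirhead fact---the vector specialization of the T-transform description of majorization already recorded in Section~2---that $\underline{a} \prec \underline{b}$ implies $\underline{a}$ can be obtained from $\underline{b}$ by a finite sequence of such non-crossing T-transforms between sorted coordinates. Chaining the single-transfer inequality along this sequence yields $\psi(\underline{a}) \leq \psi(\underline{b})$, completing the proof. The main obstacle is the sufficiency direction: one must make the differentiation-along-a-transfer argument rigorous (constant sign of the directional derivative, with the endpoint $\mu = \tfrac12$ and the coincidence $d_i = d_j$ dispatched by continuity) and invoke the transfer decomposition in a way that keeps every intermediate vector inside $A^n$, so that the hypothesis can legitimately be applied at each point; by contrast, the necessity direction is routine once the Pigou--Dalton path $\underline{a}(t)$ is set up.
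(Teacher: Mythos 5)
Your proof is correct: it is the standard Schur--Ostrowski argument --- a Pigou--Dalton transfer to extract the derivative inequality for necessity, and for sufficiency the monotonicity of $\psi$ along a T-transform path combined with the Hardy--Littlewood--P\'olya decomposition of majorization into finitely many such transfers (with the intermediate points staying in $A^n$ because $A$ is an interval). The paper itself offers no proof of this lemma --- it is quoted directly from Marshall et al.\ (Theorem 3.A.4), whose proof is essentially the one you have reconstructed --- so your argument matches the intended one.
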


\begin{lem}[Proposition 3.C.1, see {Marshall et al.(1979)\cite{2}}]\label{lemma2}
	\normalfont If $\mathbb{A} \subset \mathbb{R}$ is an interval and $h: \mathbb{A} \rightarrow \mathbb{R}$ is convex (concave), then $\psi(\underline{a}) = \displaystyle\sum_{i=1}^n h(a_i)$ is Schur-convex (Schur-concave) on $\mathbb{A}^n$, where $\underline{a} = (a_1, \ldots, a_n)$.
\end{lem}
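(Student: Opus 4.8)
The plan is to deduce the result directly from the differential (Schur--Ostrowski) criterion of the preceding lemma (Theorem 3.A.4), treating the smooth case first and then removing the smoothness assumption by approximation.

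First I would observe that $\psi(\underline{a}) = \sum_{i=1}^n h(a_i)$ is symmetric on $\mathbb{A}^n$, since interchanging any two arguments $a_i$ and $a_j$ merely reorders the terms of the sum and leaves the total unchanged. Symmetry is one of the two ingredients demanded by Theorem 3.A.4. Assuming momentarily that $h$ is continuously differentiable, I would then compute $\partial \psi / \partial a_i = h'(a_i)$ for each $i$, so that the mixed expression appearing in Theorem 3.A.4 becomes
\begin{equation*}
(a_i - a_j)\left(\frac{\partial \psi(\underline{a})}{\partial a_i} - \frac{\partial \psi(\underline{a})}{\partial a_j}\right) = (a_i - a_j)\bigl(h'(a_i) - h'(a_j)\bigr).
\end{equation*}
If $h$ is convex, then $h'$ is non-decreasing, so $a_i \ge a_j$ forces $h'(a_i) \ge h'(a_j)$ while $a_i \le a_j$ forces $h'(a_i) \le h'(a_j)$; in either case the two factors share a sign and the product is $\ge 0$. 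By Theorem 3.A.4 this yields Schur-convexity of $\psi$. If $h$ is concave the same computation gives a product that is $\le 0$, hence Schur-concavity.

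The main obstacle is that convexity of $h$ alone does not guarantee differentiability, so Theorem 3.A.4 is not immediately applicable as stated. To close this gap I would use an approximation argument: a convex function on an interval is continuous on its interior and can be approximated, uniformly on compact subsets, by smooth convex functions $h_\varepsilon$ (for instance by mollification). Each $\psi_\varepsilon(\underline{a}) = \sum_i h_\varepsilon(a_i)$ is Schur-convex by the smooth case above, so $\underline{a} \prec \underline{b}$ implies $\psi_\varepsilon(\underline{a}) \le \psi_\varepsilon(\underline{b})$; letting $\varepsilon \to 0$ and using pointwise convergence preserves the inequality $\psi(\underline{a}) \le \psi(\underline{b})$, establishing Schur-convexity in full generality.

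Alternatively, one may bypass differentiability entirely by invoking the classical fact that $\underline{a} \prec \underline{b}$ holds if and only if $\underline{a}$ is obtained from $\underline{b}$ by finitely many $T$-transforms, and then checking that a single $T$-transform never increases $\psi$. Since a $T$-transform with parameter $\lambda$ acts nontrivially on only two coordinates $x, y$, replacing them by $\lambda x + (1-\lambda) y$ and $(1-\lambda) x + \lambda y$, this reduces to the elementary two-point inequality
\begin{equation*}
h\bigl(\lambda x + (1-\lambda) y\bigr) + h\bigl((1-\lambda) x + \lambda y\bigr) \le h(x) + h(y),
\end{equation*}
which is an immediate consequence of the convexity of $h$ (both arguments on the left lie between $x$ and $y$ and average to the same pair of weights). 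Iterating over the finite chain of $T$-transforms then gives $\psi(\underline{a}) \le \psi(\underline{b})$ directly, with the concave case following by reversing the inequality.
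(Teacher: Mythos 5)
The paper offers no proof of this lemma at all: it is quoted verbatim as Proposition 3.C.1 of Marshall, Olkin and Arnold and used as a black box, so there is nothing internal to compare against. Judged on its own merits, your argument is correct, and your second route is the cleaner and more complete of the two. The reduction of $\underline{a} \prec \underline{b}$ to a finite chain of $T$-transforms is valid for vectors (the caveat in the paper about majorization not implying chain majorization applies only to matrices with $m \ge 2$ rows), and the two-point inequality
\[
h\bigl(\lambda x + (1-\lambda)y\bigr) + h\bigl((1-\lambda)x + \lambda y\bigr) \le \lambda h(x) + (1-\lambda)h(y) + (1-\lambda)h(x) + \lambda h(y) = h(x) + h(y)
\]
is exactly the right elementary step; iterating it finishes the proof with no regularity hypotheses on $h$ whatsoever. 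Your first route is also essentially sound, and you correctly identified its weak point: Theorem 3.A.4 requires continuous differentiability and an open interval, so mollification only recovers the result on the interior of $\mathbb{A}$ (a convex $h$ can jump upward at an endpoint of a non-open interval, where uniform approximation on compact subsets of the interior says nothing). Since the $T$-transform argument covers the general case, I would present it as the primary proof and relegate the differential computation to a remark.
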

\begin{lem}[Theorem 3.A.8, see {Marshall et al.(1979)\cite{2}}] \label{lemma3}
\normalfont	Let $S \subset \mathbb{R}^n$, a function $f: S \rightarrow \mathbb{R}$ satisfying $$\underline{a} \prec_w \underline{b} ~(\underline{a} \prec^w \underline{b}) \mbox{ on S } \Rightarrow f(\underline{a}) \leq f(\underline{b})$$ if and only if $f$ is increasing (decreasing) and Schur-convex on $S$.
\end{lem}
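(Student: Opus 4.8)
The plan is to prove both directions of the stated equivalence, treating the weak submajorization case $\underline{a} \prec_w \underline{b}$ (paired with ``increasing'') in full and noting that the weak supermajorization case $\underline{a} \prec^w \underline{b}$ (paired with ``decreasing'') follows by a symmetric argument. Throughout, ``increasing'' is read as nondecreasing in each coordinate, and I will lean on the chain $\underline{a} \prec^{w} \underline{b} \Leftarrow \underline{a} \prec \underline{b} \Rightarrow \underline{a} \prec_{w} \underline{b}$ already recorded in the excerpt.

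For sufficiency ($\Leftarrow$), suppose $f$ is increasing and Schur-convex and that $\underline{a} \prec_w \underline{b}$. The key tool I would invoke is the intermediate-vector characterization of weak submajorization: $\underline{a} \prec_w \underline{b}$ holds if and only if there is a vector $\underline{c}$ with $\underline{a} \leq \underline{c}$ componentwise and $\underline{c} \prec \underline{b}$. Granting this, I combine $f(\underline{a}) \leq f(\underline{c})$ (since $f$ is increasing and $\underline{a} \leq \underline{c}$) with $f(\underline{c}) \leq f(\underline{b})$ (since $f$ is Schur-convex and $\underline{c} \prec \underline{b}$) to conclude $f(\underline{a}) \leq f(\underline{b})$.

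For necessity ($\Rightarrow$), assume $\underline{a} \prec_w \underline{b} \Rightarrow f(\underline{a}) \leq f(\underline{b})$. Schur-convexity is immediate, because $\underline{a} \prec \underline{b}$ implies $\underline{a} \prec_w \underline{b}$, so the hypothesis gives $f(\underline{a}) \leq f(\underline{b})$ whenever $\underline{a} \prec \underline{b}$. To obtain monotonicity I would show that $\underline{a} \leq \underline{b}$ componentwise forces $\underline{a} \prec_w \underline{b}$: writing the sum of the $k$ largest entries as $\max_{|S|=k}\sum_{i\in S}a_i$ and comparing over each fixed index set $S$ gives $\sum_{i=1}^k a_{n-i+1:n} \leq \sum_{i=1}^k b_{n-i+1:n}$ for every $k$; the hypothesis then yields $f(\underline{a}) \leq f(\underline{b})$, so $f$ is increasing.

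The $\prec^w$ case is handled symmetrically: for sufficiency I use the companion characterization, namely $\underline{a} \prec^w \underline{b}$ if and only if there exists $\underline{c}$ with $\underline{a} \geq \underline{c}$ and $\underline{c} \prec \underline{b}$, and for the decreasing half of necessity I use the implication $\underline{a} \geq \underline{b} \Rightarrow \underline{a} \prec^w \underline{b}$, proved by comparing sums of the $k$ smallest entries. The main obstacle is the intermediate-vector lemma linking weak majorization to ordinary majorization through a componentwise-comparable $\underline{c}$; this is where the real content sits, and I would either cite it from Marshall et al.\cite{2} or construct $\underline{c}$ explicitly by redistributing the slack $\sum b_i - \sum a_i$ so that the partial-sum inequalities are preserved while the totals are equalized.
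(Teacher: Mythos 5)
The paper does not prove this lemma at all --- it is imported verbatim as Theorem 3.A.8 of Marshall, Olkin and Arnold \cite{2} and used as a black box --- so there is no in-paper argument to compare against; what you have written is essentially the standard textbook proof of that cited theorem, and it is correct in outline. Your necessity direction is complete and self-contained: Schur-convexity from $\underline{a}\prec\underline{b}\Rightarrow\underline{a}\prec_w\underline{b}$, and monotonicity from the observation that componentwise $\underline{a}\leq\underline{b}$ forces $\underline{a}\prec_w\underline{b}$ via the $\max$-over-index-sets representation of the partial sums (and dually for $\prec^w$). The sufficiency direction correctly isolates the crux, namely the intermediate-vector characterization $\underline{a}\prec_w\underline{b}\iff\exists\,\underline{c}$ with $\underline{a}\leq\underline{c}\prec\underline{b}$; this is itself a nontrivial result (Proposition 5.A.9 and 5.A.9.a in \cite{2}), and deferring it to the same reference is reasonable given that the whole lemma is a citation here. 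Two small cautions: first, your sufficiency argument needs $\underline{c}\in S$ for $f(\underline{c})$ to be defined and for Schur-convexity to apply, which holds when $S=\mathbb{R}^n$ (the setting in which \cite{2} actually states Theorem 3.A.8) but can fail for an arbitrary subset $S$ as written in the paper's loose formulation; second, you overload the symbol $S$ as both the domain of $f$ and the index sets in $\max_{|S|=k}$, which should be renamed. Neither affects the substance.
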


\begin{lem}[Proposition 15.B.1, see {Marshall et al.(1979)\cite{2}}]
\normalfont A function $\psi: \mathbb{R}^4 \rightarrow \mathbb{R} $ is differentiable and satisfies
$$\psi(A) \leq (\geq) \psi(B) ~~~~ \forall A \prec \prec B $$
if and only if the following conditions are satisfied:
\begin{enumerate}
\item $\psi(A) = \psi(A \Pi)~~~\forall$ permutation matrices $\Pi$,
\item $\displaystyle\sum_{i=1}^2 (a_{ij} - a_{ik})\left(\dfrac{\partial \psi(A)}{\partial a_{ij}} - \dfrac{\partial \psi(A)}{\partial a_{ik}}\right) \geq (\leq) 0$ $~~~~~\forall j, k =1,2$.
\end{enumerate}
\end{lem}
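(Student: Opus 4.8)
The plan is to exploit that chain majorization is generated by T-transforms: recall that $A\prec\prec B$ means exactly that $A=B\,T^{\lambda_1}_{1,2}\cdots T^{\lambda_m}_{1,2}$ for some finite list of $2\times 2$ T-transforms (only the transposition $\Pi_{1,2}$ occurs when $n=2$). Introducing the partial products $B_0=B$ and $B_\ell=B_{\ell-1}T^{\lambda_\ell}_{1,2}$, so that $B_m=A$ and consecutive matrices differ by a \emph{single} T-transform, transitivity reduces the whole statement to the one-step claim: if $C=D\,T^{\lambda}_{1,2}$ with $0\le\lambda\le 1$, then $\psi(C)\le(\ge)\psi(D)$. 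Thus the first move is to analyse a single T-transform as a one-parameter family and differentiate in $\lambda$.

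Concretely, fixing $D=\{d_{il}\}$, set $g(\lambda)=\psi(D\,T^{\lambda}_{1,2})$, whose affected entries are $c_{i1}=\lambda d_{i1}+(1-\lambda)d_{i2}$ and $c_{i2}=\lambda d_{i2}+(1-\lambda)d_{i1}$ for $i=1,2$. Two structural facts drive the argument. First, since $\Pi_{1,2}^2=I$ one checks $T^{1-\lambda}_{1,2}=T^{\lambda}_{1,2}\Pi_{1,2}$, whence $g(1-\lambda)=\psi(C\,\Pi_{1,2})=g(\lambda)$ by the symmetry condition (1); so $g$ is symmetric about $\lambda=\tfrac12$ and it suffices to control it on $[\tfrac12,1]$. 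Second, the chain rule together with $c_{i1}-c_{i2}=(2\lambda-1)(d_{i1}-d_{i2})$ gives, after substituting $d_{i1}-d_{i2}=(2\lambda-1)^{-1}(c_{i1}-c_{i2})$,
\begin{equation*}
g'(\lambda)=\frac{1}{2\lambda-1}\sum_{i=1}^2 (c_{i1}-c_{i2})\left(\frac{\partial\psi(C)}{\partial c_{i1}}-\frac{\partial\psi(C)}{\partial c_{i2}}\right),
\end{equation*}
so that $g'$ is, up to the scalar $(2\lambda-1)^{-1}$, precisely the expression appearing in condition (2) evaluated at $C$.

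For sufficiency, condition (2) makes the sum nonnegative, while on $(\tfrac12,1]$ the factor $(2\lambda-1)^{-1}$ is positive; hence $g'\ge 0$ there, $g$ increases on $[\tfrac12,1]$, and the symmetry about $\tfrac12$ yields $g(\lambda)\le g(1)=\psi(D)$ for all $\lambda\in[0,1]$, i.e. $\psi(C)\le\psi(D)$. Telescoping $\psi(A)=\psi(B_m)\le\cdots\le\psi(B_0)=\psi(B)$ finishes this direction; the "$\ge$" version is identical with every inequality reversed.

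For necessity I would read both conditions off the hypothesis. Every permutation $\Pi$ is a product of transpositions, each a T-transform with $\lambda=0$, so $A\Pi\prec\prec A$ and, via $\Pi^{-1}$, $A\prec\prec A\Pi$; applying the assumed inequality both ways forces $\psi(A)=\psi(A\Pi)$, which is (1). For (2), fix $A$ and note $A\,T^{\lambda}_{1,2}\prec\prec A$ gives $g(\lambda)\le g(1)$ for every $\lambda\in[0,1]$, so the left derivative at the maximiser obeys $g'(1)\ge 0$; evaluating the displayed formula at $\lambda=1$ (where $C=A$ and $2\lambda-1=1$) returns exactly $\sum_{i=1}^2(a_{i1}-a_{i2})\left(\partial\psi(A)/\partial a_{i1}-\partial\psi(A)/\partial a_{i2}\right)\ge 0$. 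The step I expect to be delicate is the bookkeeping around $\lambda=\tfrac12$: because $g'$ carries the singular factor $(2\lambda-1)^{-1}$, the monotonicity argument must be confined to $(\tfrac12,1]$ and then transported to $[0,\tfrac12)$ through the symmetry $g(\lambda)=g(1-\lambda)$, which is exactly where the permutation-invariance condition (1) becomes indispensable rather than merely cosmetic.
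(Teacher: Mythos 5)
The paper does not prove this lemma at all: it is imported verbatim (as Proposition 15.B.1 of Marshall, Olkin and Arnold) and used as a black box, so there is no in-paper argument to compare against. Your proof is correct and is essentially the textbook derivation. The reduction of $A \prec\prec B$ to a chain of single T-transforms via transitivity is exactly how the definition in Section 2.2 is set up, and for $n=2$ your observation that every nontrivial T-transform is built on $\Pi_{1,2}$ is right (one with $\Pi=I$ is the identity and can be discarded). The computation $c_{i1}-c_{i2}=(2\lambda-1)(d_{i1}-d_{i2})$ and the resulting formula $g'(\lambda)=(2\lambda-1)^{-1}\sum_{i}(c_{i1}-c_{i2})\bigl(\partial\psi(C)/\partial c_{i1}-\partial\psi(C)/\partial c_{i2}\bigr)$ check out, and you correctly isolate the only delicate point: the factor $(2\lambda-1)^{-1}$ forces you to argue monotonicity only on $(\tfrac12,1]$ and to carry the conclusion to $[0,\tfrac12)$ through the reflection $g(\lambda)=g(1-\lambda)$, which is where condition (1) is genuinely needed and not just decorative. (At $\lambda=\tfrac12$ itself the unsubstituted form $g'(\lambda)=\sum_i(d_{i1}-d_{i2})\bigl(\partial\psi/\partial c_{i1}-\partial\psi/\partial c_{i2}\bigr)$ is finite, so continuity of $g$ is not an issue.) The necessity direction is also sound: $A\Pi\prec\prec A$ and $A\prec\prec A\Pi$ give (1), and the one-sided derivative inequality $g'(1)\ge 0$ at the maximiser, evaluated via your formula at $\lambda=1$, gives (2); the cases $j=k$ and $(j,k)=(2,1)$ are trivially equivalent to $(1,2)$, so the quantifier over all $j,k$ is covered. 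In short: a complete and correct proof of a statement the paper only cites.
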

\begin{lem}\label{lh1}
\normalfont Let $h_1:[0,\infty) \rightarrow \mathbb{R}$ be defined as
\begin{equation} \label{h1x}
h_1(x) = e^{x} - xe^{x}-1,
\end{equation}
then $h_1(x) \leq 0$ $\forall x >0$.
\end{lem}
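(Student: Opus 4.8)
The plan is to prove the inequality by a straightforward monotonicity argument based on the first derivative, since $h_1$ vanishes at the left endpoint of its domain. First I would rewrite the function in the factored form $h_1(x) = e^x(1-x) - 1$ and evaluate it at the boundary point, obtaining $h_1(0) = e^0(1-0) - 1 = 0$. This identifies $x=0$ as the value against which all other values will be compared.

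Next I would differentiate. Using the product rule on $xe^x$,
\begin{equation*}
h_1'(x) = e^x - \bigl(e^x + xe^x\bigr) = -xe^x.
\end{equation*}
The key observation is that $-xe^x < 0$ for every $x > 0$, because $e^x > 0$ always and $x > 0$ by assumption. Hence $h_1$ is strictly decreasing on $(0,\infty)$.

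Combining the two steps, for any $x > 0$ the strict monotonicity gives $h_1(x) < h_1(0)$, and since $h_1(0) = 0$ we conclude $h_1(x) < 0$, which in particular yields $h_1(x) \leq 0$ as claimed. There is no genuine obstacle here: the argument reduces to the sign of a single derivative, and the only point requiring care is simply confirming that the derivative of the $-xe^x$ term is handled correctly via the product rule so that the cancellation with $e^x$ is clean.
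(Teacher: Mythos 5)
Your proof is correct and follows essentially the same route as the paper: compute $h_1'(x)=-xe^x<0$, conclude $h_1$ is decreasing, and compare with $h_1(0)=0$. The only difference is cosmetic (you note strict negativity for $x>0$, which is slightly stronger than the stated $\leq 0$).
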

\bf{Proof}
\normalfont The derivative of $h_1(x)$ with respect to $x$,
\begin{align*}
h_1^{\prime}(x) &= -xe^{x}\\
& <0.
\end{align*}
Therefore $h_1(x)$ is a decreasing function and $h_1(0)=0$, then $x>0 \Rightarrow h_1(x) \leq 0$.

\begin{lem}\label{lh2}
\normalfont Let $h_2:[0,\infty) \rightarrow \mathbb{R}$ be defined as
\begin{equation}\label{h2x}
h_2(x) = xe^{x}-2e^{x} +x+2,
\end{equation}
then $h_2(x) \geq 0$ $\forall x>0$.
\end{lem}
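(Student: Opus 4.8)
The plan is to establish that $h_2$ is nondecreasing on $[0,\infty)$ while vanishing at the origin; positivity on $(0,\infty)$ then follows at once. First I would differentiate $h_2$ from \eqref{h2x}, obtaining
\begin{equation*}
h_2'(x) = (e^{x} + xe^{x}) - 2e^{x} + 1 = xe^{x} - e^{x} + 1.
\end{equation*}
The key observation is that this derivative is precisely the negative of the function $h_1$ from \eqref{h1x}: since $h_1(x) = e^{x} - xe^{x} - 1$, we have $-h_1(x) = xe^{x} - e^{x} + 1 = h_2'(x)$. This identity is the crux of the argument, as it lets me reuse the previous lemma rather than analyze $h_2'$ from scratch.

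Having made this connection, I would invoke Lemma~\ref{lh1}, which gives $h_1(x) \leq 0$ for all $x > 0$; hence $h_2'(x) = -h_1(x) \geq 0$ on $(0,\infty)$, so $h_2$ is nondecreasing there. Finally, evaluating at the left endpoint yields $h_2(0) = 0 - 2 + 0 + 2 = 0$, and combining this with monotonicity gives $h_2(x) \geq h_2(0) = 0$ for every $x > 0$, as desired.

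Because each step is an elementary differentiation or a direct appeal to Lemma~\ref{lh1}, I do not anticipate any genuine obstacle here. The only mildly delicate point is spotting the algebraic identity $h_2' = -h_1$; without it one would instead differentiate a second time (finding $h_2''(x) = xe^{x} > 0$, so that $h_2'$ is increasing from the value $h_2'(0) = 0$) and then run the same monotonicity argument one level higher, which is a fully equivalent but slightly longer route.
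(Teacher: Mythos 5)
Your proof is correct and is essentially identical to the paper's: both compute $h_2'(x) = xe^{x}-e^{x}+1 = -h_1(x)$, invoke Lemma~\ref{lh1} to conclude $h_2' \geq 0$, and finish by noting $h_2(0)=0$. No issues.
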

\bf{Proof}
\normalfont The derivative of $h_2(x)$ with respect to $x$,
\begin{align*}
h_2^{\prime}(x)&= xe^{x} -e^{x} +1\\
&=-h_1(x)\\
&>0.
\end{align*}
Therefore $h_2(x)$ is an increasing function and $h_2(0)=0$ then $x >0 \Rightarrow h_2(x) \geq 0$.\\

 Let us define
 $$P_n =
\begin{cases}
 (a,b) = \begin{bmatrix}
a_{1}&\cdots &a_{n} \\
b_{1}&\cdots &b_{n}
\end{bmatrix}: a_i, b_j >0, (a_i-a_j)(b_i-b_j)\geq 0, i,j = 1,2,\cdots,n.
\end{cases}
 $$
 This shall be used in the result section of the paper.
\section{Results}
Let $X_1, X_2, \ldots, X_n$ be independent random variables with $X_i \sim W-G(\alpha_i, \beta_i, \gamma_i), i=1,2,\ldots, n$ and let $Y_1, Y_2, \ldots, Y_n$ be another set of independent random variables with $Y_i \sim W-G(\alpha_i^*, \beta_i^*, \gamma_i^*), i=1,2,\ldots, n$. When $\beta_1 = \beta_2 = \ldots = \beta_n= \beta_1^* = \beta_2^* = \ldots = \beta_n^*$ and the matrix containing the parameters $\alpha_i, \gamma_i$ changes to another matrix containing the parameters $\alpha_i^*, \gamma_i^*, i=1,2, \ldots, n $,  in the sense of multivariate chain majorization, we study the hazard rate ordering of the smallest ordered statistic. The first result is studied when the value of $n=2$.
\subsection{Results for smallest ordered statistic with multivariate chain majorized components}
\begin{theorem}\label{thm1}
\normalfont Let $X_1,X_2$ and $Y_1,Y_2$ be 2 pairs of independent random variable such that $X_i \sim W-G(\alpha_i, \beta, \gamma_i)$ and $Y_i \sim W-G(\alpha_i^*, \beta, \gamma_i^*)$ for $i=1,2$.
Whenever $\beta \geq 2$, $w(x)$ is an increasing convex function, also $w^{\prime \prime}(x)$ is increasing and $\begin{bmatrix}
\alpha_1 & \alpha_2\\
\gamma_1 & \gamma_2
\end{bmatrix} \in P_2$, then 
\begin{equation*}
\begin{bmatrix}
\alpha_1 & \alpha_2\\
\gamma_1 & \gamma_2
\end{bmatrix} \prec \prec \begin{bmatrix}
\alpha_1^* & \alpha_2^*\\
\gamma_1^* & \gamma_2^*
\end{bmatrix} \Rightarrow X_{1:2} \geq_{hr} Y_{1:2}.
\end{equation*}

\end{theorem}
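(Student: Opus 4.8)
The plan is to reduce the hazard-rate comparison to the differential criterion for chain majorization (Proposition 15.B.1) applied to the hazard rate of the series system, regarded as a function of the four parameters. Since $X_1,X_2$ are independent, the survival function of the series system factorizes,
\begin{equation*}
\overline{F}_{X_{1:2}}(x) = \prod_{i=1}^{2} e^{-\alpha_i [w(\gamma_i x)]^{\beta}} = \exp\Big(-\sum_{i=1}^{2}\alpha_i [w(\gamma_i x)]^{\beta}\Big),
\end{equation*}
so the hazard rate is additive,
\begin{equation*}
r_{X_{1:2}}(x) = \sum_{i=1}^{2}\alpha_i\beta\gamma_i\,[w(\gamma_i x)]^{\beta-1} w'(\gamma_i x).
\end{equation*}
Because $X_{1:2}\geq_{hr} Y_{1:2}$ is equivalent to $r_{X_{1:2}}(x)\leq r_{Y_{1:2}}(x)$ for every $x$, I would fix $x>0$ and set $\psi(\alpha_1,\alpha_2,\gamma_1,\gamma_2)=r_{X_{1:2}}(x)$, viewed as a function of the matrix $A$. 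It then suffices to prove $\psi(A)\leq\psi(A^*)$ whenever $A\prec\prec A^*$, for each fixed $x$.

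Next I would invoke Proposition 15.B.1 in the ``$\leq$'' direction. Condition (1), invariance under column permutations, is immediate since $\psi$ is a symmetric sum over the two columns $(\alpha_i,\gamma_i)$. For condition (2) only the pair $(j,k)=(1,2)$ is nontrivial; writing $g(\gamma)=\gamma [w(\gamma x)]^{\beta-1} w'(\gamma x)$ so that $\psi=\beta\sum_i\alpha_i g(\gamma_i)$, the partials are
\begin{equation*}
\frac{\partial\psi}{\partial\alpha_1}-\frac{\partial\psi}{\partial\alpha_2}=\beta\big(g(\gamma_1)-g(\gamma_2)\big),\qquad \frac{\partial\psi}{\partial\gamma_1}-\frac{\partial\psi}{\partial\gamma_2}=\beta\big(\alpha_1 g'(\gamma_1)-\alpha_2 g'(\gamma_2)\big),
\end{equation*}
so the required inequality reduces (after dividing by $\beta>0$) to
\begin{equation*}
(\alpha_1-\alpha_2)\big(g(\gamma_1)-g(\gamma_2)\big)+(\gamma_1-\gamma_2)\big(\alpha_1 g'(\gamma_1)-\alpha_2 g'(\gamma_2)\big)\geq 0.
\end{equation*}

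Finally I would establish the sign. By the $P_2$ hypothesis $(\alpha_1-\alpha_2)(\gamma_1-\gamma_2)\geq 0$, and since the expression is symmetric under $1\leftrightarrow2$, I may assume $\alpha_1\geq\alpha_2$ and $\gamma_1\geq\gamma_2$. Writing $g(\gamma)=\gamma\phi(\gamma)$ with $\phi(\gamma)=[w(\gamma x)]^{\beta-1}w'(\gamma x)$, the hypotheses that $w$ is increasing and convex (so $w,w',w''\geq 0$) together with $\beta\geq 2$ yield $\phi\geq 0$ and $\phi'\geq 0$, hence $g\geq 0$ and $g'=\phi+\gamma\phi'\geq 0$; thus $g$ is nonnegative and increasing, making the first term a product of like-signed factors. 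For the second term I need $g$ convex, i.e.\ $g''=2\phi'+\gamma\phi''\geq 0$, so that $g'$ is nonnegative and increasing and therefore $\alpha_1 g'(\gamma_1)\geq\alpha_2 g'(\gamma_2)$. Both terms are then nonnegative, condition (2) holds, and the theorem follows.

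The main obstacle is the convexity of $g$: computing $\phi''$ produces the three terms $(\beta-1)(\beta-2)\,w^{\beta-3}(w')^{3}$, $3(\beta-1)\,w^{\beta-2}w'w''$ and $w^{\beta-1}w'''$ (all arguments $\gamma x$), and it is precisely to force each of these to be nonnegative that one needs $\beta\geq 2$ (for the factor $(\beta-1)(\beta-2)$) and $w''$ increasing (for $w'''\geq 0$). Once these sign conditions are in hand, the remaining bookkeeping is routine given the $P_2$ ordering.
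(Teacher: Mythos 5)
Your proposal is correct and follows essentially the same route as the paper: both compute the additive hazard rate of the series system, apply the differential criterion of Proposition 15.B.1 for a fixed $x$, and reduce condition (2) to showing that $f(\gamma)=\gamma[w(\gamma x)]^{\beta-1}w'(\gamma x)$ is increasing and convex, with $\beta\geq 2$ and $w'''\geq 0$ entering exactly where you say they do (the $(\beta-1)(\beta-2)$ coefficient and the $w^{\beta-1}w'''$ term). Your handling of the second term (using the $P_2$ ordering together with $f'$ nonnegative and increasing to get $\alpha_1 f'(\gamma_1)\geq\alpha_2 f'(\gamma_2)$) is in fact slightly more explicit than the paper's, but it is the same argument.
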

\begin{proof}
\normalfont
The reliability function of $X_{1:2}$ is
\begin{equation}
\overline{G}_{X_{1:2}}(x)=e^{\displaystyle-\sum_{i=1}^2 \alpha_i(w(\gamma_i x))^{\beta}}, ~\alpha_i >0, \beta >0, \gamma_i >0 ~~\forall i=1, 2, \ldots, n.
\end{equation}

Taking logarithm on both sides,
\begin{equation} \label{rWG}
-\ln\overline{G}_{X_{1:2}}(x) =  \displaystyle\sum_{i=1}^2 \alpha_i(w(\gamma_i x))^{\beta},
\end{equation}
differentiating \eqref{rWG} with respect to $x$, we obtain the hazard rate function as
$$r_{X_{1:2}}(x)= \beta \displaystyle\sum_{i=1}^2 \alpha_i \gamma_i (w(\gamma_i x))^{\beta -1}w'(\gamma_i x).$$
Consider $\psi(\underline{\alpha}, \underline{\gamma}) = (\alpha_1 - \alpha_2)\left(\dfrac{\partial r_{X_{1:2}}(x)}{\partial \alpha_1} - \dfrac{\partial r_{X_{1:2}}(x)}{\partial \alpha_2}\right)+(\gamma_1 - \gamma_2)\left(\dfrac{\partial r_{X_{1:2}}(x)}{\partial \gamma_1} - \dfrac{\partial r_{X_{1:2}}(x)}{\partial \gamma_2}\right)$.\\
Hence we have,
$$\psi(\underline{\alpha}, \underline{\gamma}) = \beta [(\alpha_1 - \alpha_2)(f(\gamma_1)-f(\gamma_2))+(\gamma_1 - \gamma_2)(\alpha_1 g(\gamma_1 x)-\alpha_2 g(\gamma_2 x))] ,$$
such that $f(\gamma) = \gamma(w(\gamma x))^{\beta - 1}w'(\gamma x)$ and $g(t) = (w(t))^{\beta -2}(w(t)w'(t)+ (\beta -1)t(w'(t))^2 + t w(t) w''(t))$. \\
We observe that
$$\dfrac{d}{d \gamma}f(\gamma) = (w(\gamma x))^{\beta -2} (w(\gamma x)w'(\gamma x)+ \gamma x (\beta -1)(w'(\gamma x))^2 + \gamma x w(\gamma x)w''(\gamma x))$$
 is positive for $\beta \geq 2$ and $w(.)$ is a convex and increasing function in its domain. Moreover,
 \begin{align*}
 g'(t) = &2(\beta -1)(w(t))^{\beta -2}(w'(t))^2 + t(\beta -1)(\beta -2)(w(t))^{\beta -3}(w'(t))^3 + 3(\beta -1)t (w(t))^{\beta -2}w'(t)w''(t) \\
 & + 2(w(t))^{\beta -1}w''(t)+ t (w(t))^{\beta -1} w'''(t)
 \end{align*}
 is positive as $\beta  \geq 2$ and $w'''(t) \geq 0$. \\
 Hence, $ (\underline{\alpha}, \underline{\gamma}) \in P_2 \Rightarrow \psi(\underline{\alpha}, \underline{\gamma}) \geq 0$. Using Lemma 2.4 we conclude here that $r_{X_{1:2}}(x) \leq r_{Y_{1:2}}(x) $, i.e., $X_{1:2} \geq_{hr} Y_{1:2}$.
 \end{proof}

 \begin{example}
\normalfont Let $X_1, X_2$ be independent random variables such that $X_i \sim W-Exp(\alpha_i, \beta, \gamma_i)$, $i=1,2$. Also let $Y_1, Y_2$ be another pair of independent random variable such that $Y_i \sim W-Exp(\alpha_i^*, \beta, \gamma_i^*), i=1,2$ (the baseline distribution is Exponential with cdf, $F(x) = 1- e^{-\gamma x})$. The parameters are given in the form of matrices as
 \begin{equation*}
 \begin{bmatrix}
\alpha_1^* & \alpha_2^*\\
\gamma_1^* & \gamma_2^*
\end{bmatrix} = \begin{bmatrix}
4.8 & 3.4\\
2.5 & 1.6
\end{bmatrix} \in P_2 \text{ and } \begin{bmatrix}
\alpha_1 & \alpha_2\\
\gamma_1 & \gamma_2
\end{bmatrix} = \begin{bmatrix}
4.03 & 4.17\\
2.005 & 2.095
\end{bmatrix}.
 \end{equation*}
 It can be observed that $\begin{bmatrix}
\alpha_1 & \alpha_2\\
\gamma_1 & \gamma_2
\end{bmatrix} = \begin{bmatrix}
\alpha_1^* & \alpha_2^*\\
\gamma_1^* & \gamma_2^*
\end{bmatrix} T^{0.45}$, and it satisfies the condition

 \begin{equation*}
\begin{bmatrix}
\alpha_1 & \alpha_2\\
\gamma_1 & \gamma_2
\end{bmatrix} \prec \prec \begin{bmatrix}
\alpha_1^* & \alpha_2^*\\
\gamma_1^* & \gamma_2^*
\end{bmatrix} \Rightarrow X_{1:2} \geq_{hr} Y_{1:2}.
\end{equation*}
The plot of the difference $r_{Y_{1:2}} - r_{X_{1:2}}$ is
\begin{figure}[h]
		\centering
			\includegraphics[scale=0.3]{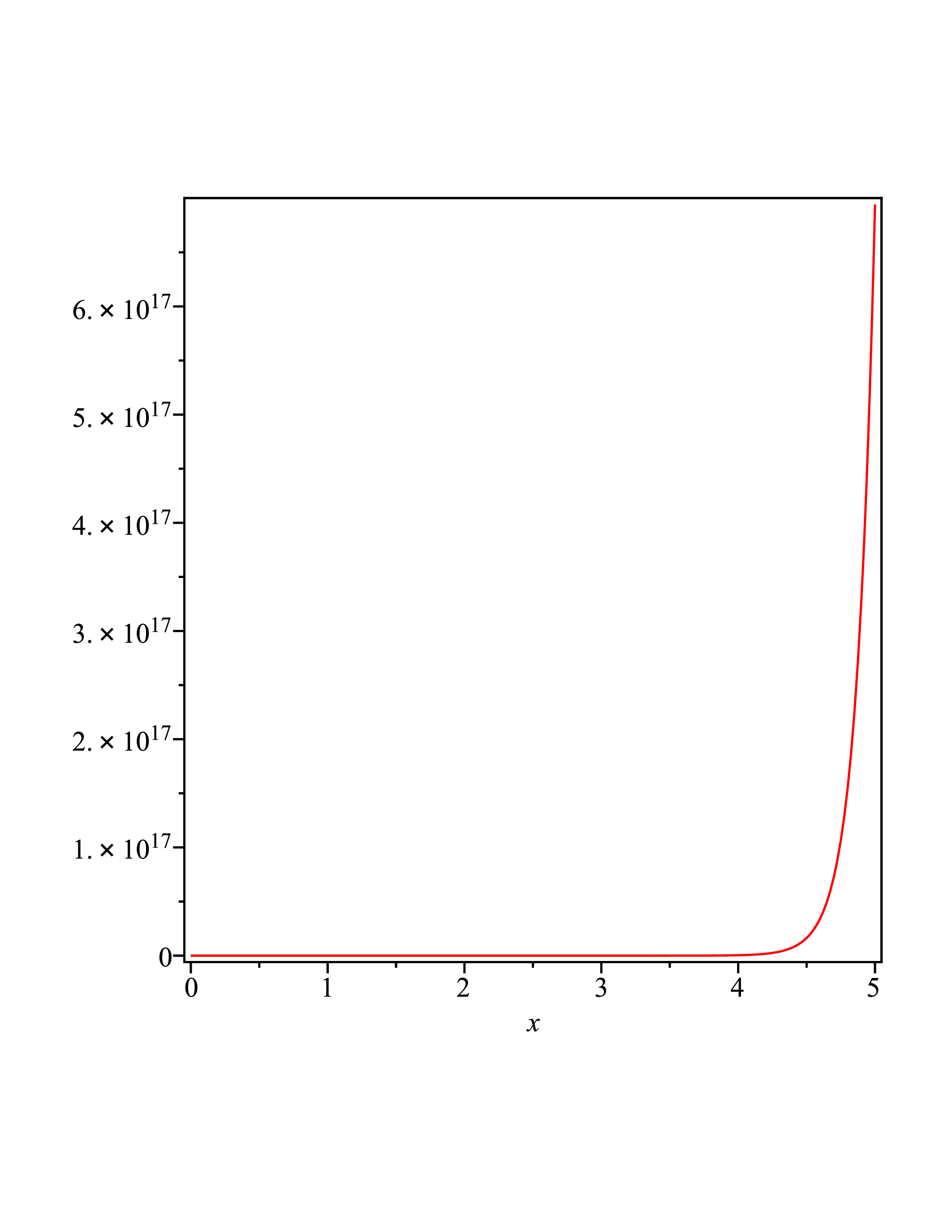}
			
			 \caption{The figure shows the graph of $r_{Y_{1:2}} - r_{X_{1:2}}$ for $\beta = 3$.}
	\end{figure}
	
 \end{example}

 The next result is observed for $n>2$.
 \begin{theorem}\label{thm2}
 \normalfont Let $X_1, \ldots, X_n$ be a set of independent random variables such that $X_i \sim W-G(\alpha_i, \beta, \gamma_i), i=1,\ldots,n$. Also let $Y_1,\ldots,Y_n$ be another set of random variable such that $Y_i \sim W-G(\alpha_i^*, \beta, \gamma_i^*), i=1,\ldots,n$. If
 \begin{equation*}
 \begin{bmatrix}
 \alpha_1 & \ldots & \alpha_n\\
 \gamma_1 & \ldots & \gamma_n
 \end{bmatrix} \in P_n
 \end{equation*}
 and
 \begin{equation*}
 \begin{bmatrix}
 \alpha_1^* & \ldots & \alpha_n^*\\
 \gamma_1^* & \ldots & \gamma_n^*
 \end{bmatrix} = \begin{bmatrix}
 \alpha_1 & \ldots & \alpha_n\\
 \gamma_1 & \ldots & \gamma_n
 \end{bmatrix} T_{i,j}^{\lambda},
 \end{equation*}
 then $\beta \geq 2, ~w^{\prime}(x) \geq 0, ~w^{\prime\prime}(x) \geq 0,  ~w^{\prime\prime\prime}(x) \geq 0$ $\Rightarrow X_{1:n} \geq_{hr} Y_{1:n}$.
 \end{theorem}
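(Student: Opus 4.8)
The plan is to reduce the $n$-component comparison to the two-component analysis already completed in Theorem \ref{thm1}. First I would write, exactly as there, the survival function $\overline{G}_{X_{1:n}}(x) = \exp\bigl(-\sum_{k=1}^n \alpha_k (w(\gamma_k x))^{\beta}\bigr)$, take logarithms, and differentiate to obtain $r_{X_{1:n}}(x) = \beta \sum_{k=1}^n \alpha_k \gamma_k (w(\gamma_k x))^{\beta -1} w'(\gamma_k x)$, and similarly $r_{Y_{1:n}}(x)$ with the starred parameters. These are sums of one contribution per component.

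The crucial structural observation is that the T-transform $T_{i,j}^{\lambda}$ acts as the identity on every column except the $i$-th and $j$-th. Consequently $(\alpha_k^*,\gamma_k^*) = (\alpha_k,\gamma_k)$ for every $k \neq i,j$, while the two affected columns become the convex combinations $(\alpha_i^*,\gamma_i^*) = \lambda(\alpha_i,\gamma_i) + (1-\lambda)(\alpha_j,\gamma_j)$ and $(\alpha_j^*,\gamma_j^*) = (1-\lambda)(\alpha_i,\gamma_i) + \lambda(\alpha_j,\gamma_j)$. Hence all terms with index $k \neq i,j$ cancel in the difference $r_{X_{1:n}}(x) - r_{Y_{1:n}}(x)$, and the sign of that difference is governed entirely by columns $i$ and $j$. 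This collapses the claim to the two-column sub-problem with parameter matrix $\begin{bmatrix} \alpha_i & \alpha_j \\ \gamma_i & \gamma_j \end{bmatrix}$ acted on by a single $2\times 2$ T-transform, which is precisely the situation of Theorem \ref{thm1}.

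On that sub-problem I would rerun the argument of Theorem \ref{thm1} verbatim: form the Schur-type expression $\psi$ on the pair of columns $(i,j)$ and verify its sign via the same majorization criterion (Proposition 15.B.1) invoked there. The analytic inputs are unchanged, namely that $f(\gamma) = \gamma (w(\gamma x))^{\beta -1} w'(\gamma x)$ is increasing and that $g(t) = (w(t))^{\beta -2}\bigl(w(t)w'(t) + (\beta -1)t(w'(t))^2 + t\,w(t)w''(t)\bigr)$ has $g'(t) \geq 0$; both hold under the stated hypotheses $\beta \geq 2$, $w' \geq 0$, $w'' \geq 0$, $w''' \geq 0$. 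Membership of the full matrix in $P_n$ supplies, for the particular pair $(i,j)$, the sign condition $(\alpha_i - \alpha_j)(\gamma_i - \gamma_j) \geq 0$, i.e. the relevant $2\times 2$ submatrix lies in $P_2$, which is exactly the hypothesis Theorem \ref{thm1} requires.

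The main obstacle I anticipate is bookkeeping the direction of the ordering correctly: one must track that multiplying by $T_{i,j}^{\lambda}$ on the right moves the two affected columns to a convex combination and then match the resulting sign of $\psi$ to the conclusion $X_{1:n} \geq_{hr} Y_{1:n}$ through the appropriate branch of the majorization criterion. Once the localization to two columns is established, there is no genuinely new analytic content beyond Theorem \ref{thm1}; the real work lies in justifying that a single T-transform confines the comparison to one pair of components and that the $P_n$ condition descends to that pair.
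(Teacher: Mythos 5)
Your proposal is correct and follows essentially the same route as the paper: the paper's own proof simply observes that $T_{i,j}^{\lambda}$ leaves every column except the $i$-th and the $j$-th fixed, so that $X_k$ and $Y_k$ are identically distributed for all $k \neq i,j$, and then invokes Theorem \ref{thm1}. Your explicit cancellation of the unaffected terms in $r_{X_{1:n}}(x) - r_{Y_{1:n}}(x)$ and the observation that membership in $P_n$ restricts to a $P_2$ condition on the columns $i,j$ are exactly the details the paper leaves implicit.
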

 \begin{proof}
 \normalfont We observe that $X_k$ and $Y_k$ have the same distribution (the parameters $\alpha_k = \alpha_k^*$ and $\gamma_k = \gamma_k^*$) $\forall k \neq i,j$ as the T-transform matrix $T_{i,j}^{\lambda} = \lambda I_{n} + (1-\lambda)\Pi_{i,j}$, where $\Pi_{i,j}$ interchanges the $i^{th}$ row with the $j^{th}$ row. Therefore applying Theorem \ref{thm1} the result follows.
 \end{proof}
 A finite product of T-transform matrices with the same structure is a T-transform matrix with the same structure. The above theorem can be applied in this case.  The finite product of T-transform matrix with different structures may or may not be a T-transform matrix in these circumstance the next theorem is useful.

 \begin{theorem}
 \normalfont Let $X_1, \ldots, X_n$ be a set of independent random variables such that $X_i \sim W-G(\alpha_i, \beta, \gamma_i), i=1,\ldots,n$. Also $Y_1,\ldots,Y_n$ be another set of random variable such that $Y_i \sim W-G(\alpha_i^*, \beta, \gamma_i^*), i=1,\ldots,n$. Presume $k \geq 2$, if
 \begin{equation*}
 \begin{bmatrix}
 \alpha_1 & \ldots & \alpha_n\\
 \gamma_1 & \ldots & \gamma_n
 \end{bmatrix} \in P_n,
 \end{equation*}
 \begin{equation*}
 \begin{bmatrix}
 \alpha_1 & \ldots & \alpha_n\\
 \gamma_1 & \ldots & \gamma_n
 \end{bmatrix} T^{\lambda_1}\ldots T^{\lambda_i} \in P_n, \text{for } i=1,\ldots, k-1,
 \end{equation*}
 and
 \begin{equation*}
 \begin{bmatrix}
 \alpha_1^* & \ldots & \alpha_n^*\\
 \gamma_1^* & \ldots & \gamma_n^*
 \end{bmatrix} = \begin{bmatrix}
 \alpha_1 & \ldots & \alpha_n\\
 \gamma_1 & \ldots & \gamma_n
 \end{bmatrix} T^{\lambda_1}\ldots T^{\lambda_k},
 \end{equation*}
 then $\beta \geq 2, ~w^{\prime}(x) \geq 0, ~w^{\prime\prime}(x) \geq 0,  ~w^{\prime\prime\prime}(x) \geq 0$ $\Rightarrow X_{1:n} \geq_{hr} Y_{1:n}$.
 \end{theorem}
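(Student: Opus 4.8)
The plan is to reduce the statement to a chain of single T-transform steps and invoke Theorem \ref{thm2} together with the transitivity of the hazard rate order. Write $M_0 = \begin{bmatrix} \alpha_1 & \cdots & \alpha_n \\ \gamma_1 & \cdots & \gamma_n \end{bmatrix}$ for the unstarred parameter matrix and, for $1 \le i \le k$, set $M_i = M_0 T^{\lambda_1}\cdots T^{\lambda_i}$, so that $M_k$ is the starred matrix. Let $Z^{(i)}_{1:n}$ denote the smallest order statistic built from $n$ independent $W\text{-}G$ variables whose $(\alpha,\gamma)$ parameters are the columns of $M_i$ and whose common shape parameter is $\beta$. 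By construction $Z^{(0)}_{1:n} = X_{1:n}$ and $Z^{(k)}_{1:n} = Y_{1:n}$, and each $M_i$ differs from $M_{i-1}$ by right multiplication by the single T-transform $T^{\lambda_i}$.

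Next I would apply Theorem \ref{thm2} to each consecutive pair. For a fixed $i \in \{1, \ldots, k\}$ the hypotheses require the source matrix $M_{i-1}$ to lie in $P_n$: this holds for $i=1$ by assumption and for $2 \le i \le k$ because $M_{i-1} = M_0 T^{\lambda_1}\cdots T^{\lambda_{i-1}} \in P_n$ is exactly the imposed intermediate condition (with $i-1$ ranging over $1,\ldots,k-1$). Since $M_i = M_{i-1} T^{\lambda_i}$ is obtained from $M_{i-1}$ by a single T-transform and $\beta \ge 2$, $w' \ge 0$, $w'' \ge 0$, $w''' \ge 0$, Theorem \ref{thm2} yields $Z^{(i-1)}_{1:n} \ge_{hr} Z^{(i)}_{1:n}$.

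Finally I would compose these $k$ inequalities using the transitivity of $\ge_{hr}$, obtaining $X_{1:n} = Z^{(0)}_{1:n} \ge_{hr} Z^{(1)}_{1:n} \ge_{hr} \cdots \ge_{hr} Z^{(k)}_{1:n} = Y_{1:n}$, which is the claim. I do not expect a genuine obstacle here; the only point requiring care is that a product $T^{\lambda_1}\cdots T^{\lambda_k}$ of T-transforms with different structures need not be a single T-transform, so Theorem \ref{thm2} cannot be applied in one shot to $M_0$ and $M_k$. This is precisely why the theorem assumes each intermediate matrix $M_0 T^{\lambda_1}\cdots T^{\lambda_i}$ stays in $P_n$: it guarantees that Theorem \ref{thm2} is legitimately applicable at every single-transform step, so that the stepwise comparisons and the transitivity of the hazard rate order can be chained together.
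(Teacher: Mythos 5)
Your proposal is correct and follows essentially the same route as the paper: both define the intermediate parameter matrices $M_j = M_0 T^{\lambda_1}\cdots T^{\lambda_j}$, associate to each a set of Weibull-G random variables, apply Theorem \ref{thm2} at each single-transform step (using the assumed membership of each intermediate matrix in $P_n$), and chain the resulting comparisons by transitivity of $\geq_{hr}$. Your remark on why the intermediate $P_n$ conditions are needed --- a product of T-transforms with different structures need not be a T-transform --- matches the paper's own motivation stated just before the theorem.
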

 \begin{proof}
 \normalfont Let us fix
 \begin{equation}
 \begin{bmatrix}
 \alpha_1^{(j)} & \ldots & \alpha_n^{(j)}\\
 \gamma_1^{(j)} & \ldots & \gamma_n^{(j)}
 \end{bmatrix} = \begin{bmatrix}
 \alpha_1 & \ldots & \alpha_n\\
 \gamma_1 & \ldots & \gamma_n
 \end{bmatrix} T^{\lambda_1}\ldots T^{\lambda_j}, j=1, \ldots, k-1.
 \end{equation}
 Consider $Y_1^{(j)},\ldots, Y_n^{(j)}, j=1, \ldots, k-1$, be sets of independent random variables with $Y_i^{(j)} \sim W-G(\alpha_i^{(j)}, \beta, \gamma_i^{(j)}), i=1, \ldots, n$ and $j=1, \ldots, k-1, k \geq 2$. It has been assumed that
 \begin{equation}
 \begin{bmatrix}
 \alpha_1^{(j)} & \ldots & \alpha_n^{(j)}\\
 \gamma_1^{(j)} & \ldots & \gamma_n^{(j)}
 \end{bmatrix} \in P_n, \text{ for } j=1,\ldots, k-1.
 \end{equation}
 Using theorem $\ref{thm2}$ repeatedly we observe that $X_{1:n} \geq_{hr} Y_{1:n}^{(1)} \geq_{hr} \ldots Y_{1:n}^{(k-1)} \geq_{hr} Y_{1:n}$.
 \end{proof}
 \subsection{Results for largest ordered statistic with vector majorized components}
 When $\beta_1 = \beta_2 = \ldots = \beta_n= \beta_1^* = \beta_2^* = \ldots = \beta_n^*$ and $\gamma_1 = \gamma_2 = \ldots = \gamma_n= \gamma_1^* = \gamma_2^* = \ldots = \gamma_n^*$ and $(\alpha_1, \alpha_2,\ldots, \alpha_n) \prec_{w} (\alpha_1^*, \alpha_2^*,\ldots, \alpha_n^*)$, we observe the following result for  the largest ordered statistic.
 \begin{theorem}
\normalfont Let $X_1, X_2, \ldots, X_n$ be a set of independent random variables from Weibull- G distributed family of distribution such that $X_i \sim W-G(\alpha_i, \beta, \gamma)$ for $i=1,2, \ldots, n$. Let $Y_1, Y_2, \ldots, Y_n$ be another set of independent random variables such that $Y_i \sim W-G(\alpha_i^*, \beta, \gamma)$ for $i = 1, 2,\ldots , n$. Then $\underline{\alpha} \prec_{w} \underline{\alpha}^* \Rightarrow X_{n:n} \leq_{rh} Y_{n:n}$.
 \end{theorem}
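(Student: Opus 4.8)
The plan is to argue through the reversed hazard rate, since $X_{n:n}\le_{rh}Y_{n:n}$ is equivalent to $\tilde r_{X_{n:n}}(x)\le \tilde r_{Y_{n:n}}(x)$ for all $x$. First I would write the distribution function of the maximum as the product $F_{X_{n:n}}(x)=\prod_{i=1}^n\bigl(1-e^{-\alpha_i (w(\gamma x))^{\beta}}\bigr)$, using that $\beta$ and $\gamma$ are common to all components. Differentiating the logarithm gives $\tilde r_{X_{n:n}}(x)=\sum_{i=1}^n \tilde r_i(x)$, and because every component shares the same $\gamma$ and $\beta$ the whole expression factors as $\tilde r_{X_{n:n}}(x)=c(x)\,\Psi(\underline\alpha)$, where $c(x)=\beta\gamma (w(\gamma x))^{\beta-1}w'(\gamma x)>0$ depends only on $x$ and $\Psi(\underline\alpha)=\sum_{i=1}^n\phi(\alpha_i)$ with $\phi(\alpha)=\dfrac{\alpha}{e^{\alpha s}-1}$ and $s=(w(\gamma x))^{\beta}>0$.

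Since $c(x)>0$, the desired inequality $\tilde r_{X_{n:n}}(x)\le\tilde r_{Y_{n:n}}(x)$ reduces to $\Psi(\underline\alpha)\le\Psi(\underline\alpha^{*})$ for every fixed $s>0$. This is exactly a statement about the preservation of a weak-majorization order by the separable function $\Psi$, which is the situation handled by Lemma \ref{lemma3}: it suffices to show that $\Psi$ is monotone and Schur-convex in $\underline\alpha$. Schur-convexity I would obtain from Lemma \ref{lemma2}, for which it is enough that the single summand $\phi$ be convex, and the monotonicity of $\Psi$ reduces in the same way to the monotonicity of $\phi$.

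The required analytic facts about $\phi$ are precisely what Lemmas \ref{lh1} and \ref{lh2} provide. Writing $\phi(\alpha)=\tfrac1s\,g(\alpha s)$ with $g(z)=z/(e^z-1)$, a direct computation gives $g'(z)=h_1(z)/(e^z-1)^2$ and, after the cancellations, $g''(z)=e^{z}h_2(z)/(e^z-1)^3$, where $h_1,h_2$ are the functions of \eqref{h1x} and \eqref{h2x}. Hence $h_1(z)\le 0$ (Lemma \ref{lh1}) forces $g'\le 0$, so $\phi$ is decreasing, while $h_2(z)\ge 0$ (Lemma \ref{lh2}) forces $g''\ge 0$, so $\phi$ is convex. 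Consequently $\Psi$ is Schur-convex, and combining this with its monotonicity and the weak-majorization hypothesis on $\underline\alpha$ through the appropriate branch of Lemma \ref{lemma3} yields $\Psi(\underline\alpha)\le\Psi(\underline\alpha^{*})$, that is, $X_{n:n}\le_{rh}Y_{n:n}$.

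I expect the main obstacle to be the two derivative computations that identify $g'$ and $g''$ so cleanly in terms of $h_1$ and $h_2$; the algebra producing the factor $e^{z}h_2(z)$ in the numerator of $g''$ is the only genuinely nontrivial step, and the role of Lemmas \ref{lh1} and \ref{lh2} is to turn these two derivatives into definite signs. A secondary point demanding care is to match the direction of monotonicity of $\phi$ (it comes out decreasing) to the correct one of the two implications in Lemma \ref{lemma3}, so that the assumed weak order on $\underline\alpha$ is paired with the right inequality.
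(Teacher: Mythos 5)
Your proposal follows essentially the same route as the paper's own proof: the identical factorization $\tilde r_{X_{n:n}}(x)=c(x)\sum_{i}g(\alpha_i)$ with $g(\alpha)=\alpha/(e^{\alpha z}-1)$, the same use of Lemmas \ref{lh1} and \ref{lh2} to get $g$ decreasing and convex, and the same appeal to Lemma \ref{lemma2} and Lemma \ref{lemma3}. The one point you rightly flag as delicate --- that a \emph{decreasing} Schur-convex $\Psi$ pairs in Lemma \ref{lemma3} with $\prec^{w}$ rather than with the stated hypothesis $\prec_{w}$ --- is left equally unaddressed in the paper's proof, so your argument is no less complete than the published one.
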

 \begin{proof}
 \normalfont The distribution function of $X_{n:n}$ is
 \begin{equation}
 H_{X_{n:n}}(x)= \prod_{i=1}^n \left(1- e^{-\alpha_i(w(\gamma x))^\beta}\right), ~~x>0, ~\alpha_i >0, ~\beta>0, ~\gamma>0 ~\forall ~i=1,2,\ldots,n.
 \end{equation}
 Taking logarithm on both sides and differentiating with respect to $x$ we obtain the reversed hazard rate function as
 \begin{equation}\label{rhnn}
\tilde{r}_{X_{n:n}}(x) = \beta \gamma\left(\dfrac{F(\gamma x)}{1-F(\gamma x)}\right)^ {\beta -1} \dfrac{f(\gamma x)}{(1-F(\gamma x))^2} \sum_{i=1}^n \dfrac{\alpha_i e^{-\alpha_i (w(\gamma x))^{\beta}}}{1- e^{-\alpha_i (w(\gamma x))^{\beta}}}.
 \end{equation}
 For an easier understanding we rewrite equation \eqref{rhnn} as
 \begin{equation}
 \tilde{r}_{X_{n:n}}(x) =  \beta \gamma\left(\dfrac{F(\gamma x)}{1-F(\gamma x)}\right)^ {\beta -1} \dfrac{f(\gamma x)}{(1-F(\gamma x))^2} \sum_{i=1}^n g(\alpha_i),
 \end{equation}
 such that $g(\alpha) = \dfrac{\alpha}{e^{\alpha z} -1}$, where $z(x, \beta, \gamma) \equiv z = (w(\gamma x))^{\beta}$.
 It is required to show that the function $g(\alpha)$ is decreasing and convex.
 Computing $g^{\prime}(\alpha)$ and $g^{\prime\prime}(\alpha)$ we observe that,
 \begin{align}
 g^{\prime}(\alpha) &= \dfrac{e^{\alpha z}- \alpha ze^{\alpha z} - 1}{(e^{\alpha z}-1)^2} \label{g1},\\
 g^{\prime\prime}(\alpha) &= \dfrac{ze^{\alpha z}(\alpha ze^{\alpha z}-2e^{\alpha z}+ \alpha z +2)}{(e^{\alpha z}-1)^3} \label{g2}.
 \end{align}
 From \eqref{g1}, $g^{\prime}(\alpha) \overset{sign}{=} h_1(\alpha z) = e^{\alpha z}- \alpha ze^{\alpha z} - 1$ and $h_1(t)$ is a decreasing function of $t$. We recall lemma 2.5, then $\alpha > 0 \Rightarrow g^{\prime}(\alpha) \leq 0$.
 Similarly from \eqref{g2}, $g^{\prime\prime}(\alpha) \overset{sign}{=} h_2(\alpha z) = \alpha ze^{\alpha z}-2e^{\alpha z}+ \alpha z +2$. Using lemma $\ref{lh2}$ we observe that $g^{\prime\prime}(\alpha) \geq 0$. Thus $g(\alpha)$ is a decreasing convex function. Hence using lemma 2.2, we conclude that $\tilde{r}_{X_{n:n}}(x)$ is a Schur-convex function, therefore, we obtain $\underline{\alpha} \prec_{w} \underline{\alpha}^* \Rightarrow \tilde{r}_{X_{n:n}}(x) \leq_{rh} \tilde{r}_{Y_{n:n}}(x)$.
 \end{proof}

 When $\alpha_1 = \alpha_2 = \ldots = \alpha_n= \alpha_1^* = \alpha_2^* = \ldots = \alpha_n^*$, $\beta_1 = \beta_2 = \ldots = \beta_n= \beta_1^* = \beta_2^* = \ldots = \beta_n^*$ and $(\gamma_1, \gamma_2, \ldots, \gamma_n) \prec^w (\gamma_1^*, \gamma_2^*, \ldots, \gamma_n^*)$, we observe the usual stochastic ordering of the largest ordered statistic where the baseline distribution of $X$ is Exponential.

 \begin{theorem}
 \normalfont Let $X_1,X_2, \ldots,X_n$ be a set of independent random variables such that
$X_i \sim W-Exp(\alpha,\beta,\gamma_i)$ for $i = 1, 2, \ldots, n$. Let $Y_1, Y_2, \ldots , Y_n$ be another set of independent random
variables such that $Y_i \sim W-Exp(\alpha,\beta, \gamma_i^* )$ for $i = 1, 2, \ldots, n$. Then $\underline{\gamma} \prec^w \underline{\gamma}^* \Rightarrow X_{n:n} \leq_{st} Y_{n:n}$.
 \end{theorem}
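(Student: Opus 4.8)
The plan is to pass to the distribution function of the maximum and reduce the usual stochastic order to an inequality between products. Since the baseline is exponential, $F(\gamma x) = 1 - e^{-\gamma x}$, so $w(F(\gamma x)) = F(\gamma x)/(1 - F(\gamma x)) = e^{\gamma x} - 1$, and the distribution function of the parallel system becomes $H_{X_{n:n}}(x) = \prod_{i=1}^n \bigl(1 - e^{-\alpha (e^{\gamma_i x} - 1)^\beta}\bigr)$. Because $X_{n:n} \leq_{st} Y_{n:n}$ is equivalent to $H_{X_{n:n}}(x) \geq H_{Y_{n:n}}(x)$ for every $x$, and the logarithm is increasing, it suffices to prove $\Phi(\underline{\gamma}) \geq \Phi(\underline{\gamma}^*)$ for each fixed $x > 0$, where $\Phi(\underline{\gamma}) = \ln H_{X_{n:n}}(x) = \sum_{i=1}^n h(\gamma_i)$ and $h(\gamma) = \ln\bigl(1 - e^{-\alpha(e^{\gamma x} - 1)^\beta}\bigr)$ is the \emph{same} scalar function in each coordinate.

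Next I would show that $h$ is increasing and concave on $(0,\infty)$. Set $u(\gamma) = \alpha(e^{\gamma x} - 1)^\beta$ and $g(u) = \ln(1 - e^{-u})$, so that $h(\gamma) = g(u(\gamma))$. Monotonicity is immediate since $g'(u) = 1/(e^u - 1) > 0$ and $u'(\gamma) > 0$, giving $h' = g'(u)\,u' > 0$. For concavity the chain rule gives $h'' = g''(u)(u')^2 + g'(u)\,u''$, and after substituting $w = e^{\gamma x} - 1$ (so that $u = \alpha w^\beta$ and $w w''/(w')^2 = w/(w+1)$) the condition $h'' \leq 0$ collapses, using $g''(u)/g'(u) = -e^u/(e^u-1)$, to the single inequality $\frac{u''}{(u')^2} = \frac{\beta w + \beta - 1}{\beta u(w+1)} \leq \frac{e^u}{e^u - 1}$.

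The heart of the argument, and the step I expect to be the main obstacle, is this last inequality; it turns out to be clean. If $\beta w + \beta - 1 \leq 0$ the left-hand side is nonpositive and there is nothing to prove. Otherwise, taking reciprocals it is equivalent to $1 - e^{-u} \leq \frac{\beta u (w+1)}{\beta w + \beta - 1}$, which I would obtain from two elementary bounds: first $1 - e^{-u} \leq u$, which is exactly $h_1(u) \leq 0$ from Lemma \ref{lh1}, and second $\beta w + \beta - 1 \leq \beta(w+1)$, which gives $u \leq \frac{\beta u(w+1)}{\beta w + \beta - 1}$. Chaining these two bounds yields the claim for all $\alpha, \beta, \gamma, x > 0$, so $h$ is concave.

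Finally I would assemble the pieces. Since $h$ is increasing and concave, $-h$ is decreasing and convex, so by Lemma \ref{lemma2} the function $-\Phi(\underline{\gamma}) = \sum_{i=1}^n \bigl(-h(\gamma_i)\bigr)$ is Schur-convex, and it is also decreasing. Lemma \ref{lemma3} then gives $\underline{\gamma} \prec^w \underline{\gamma}^* \Rightarrow -\Phi(\underline{\gamma}) \leq -\Phi(\underline{\gamma}^*)$, that is $\Phi(\underline{\gamma}) \geq \Phi(\underline{\gamma}^*)$, which unwinds to $H_{X_{n:n}}(x) \geq H_{Y_{n:n}}(x)$ for all $x$ and hence $X_{n:n} \leq_{st} Y_{n:n}$, as required.
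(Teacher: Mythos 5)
Your proof is correct, and while it shares the paper's overall skeleton (establish that the distribution function of $X_{n:n}$ is increasing and Schur-concave in $\underline{\gamma}$, then invoke Lemma \ref{lemma3}), the way you verify Schur-concavity is genuinely different. The paper differentiates $F_{X_{n:n}}$ directly with respect to $\gamma_i$, isolates $\psi_1(\gamma) = (e^{\gamma x}-1)^{\beta-1}e^{\gamma x}/(e^{\alpha(e^{\gamma x}-1)^{\beta}}-1)$, and proves $\psi_1$ is decreasing through a chain of nested auxiliary functions $\psi_2, \psi_3, \phi$ whose signs are controlled one derivative at a time, bottoming out at Lemma \ref{lh1}. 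You instead take logarithms, which turns the product into $\sum_i h(\gamma_i)$ with a single scalar $h$, so Schur-concavity follows from Lemma \ref{lemma2} once $h$ is shown concave; and your concavity proof collapses to the one-line inequality $1-e^{-u} \leq u \leq \beta u(w+1)/(\beta w+\beta-1)$. These are in fact the same underlying fact in disguise --- one checks that $h'(\gamma) = \alpha\beta x\,\psi_1(\gamma)$, so concavity of $h$ is exactly monotonicity of $\psi_1$ --- but your reciprocal-and-elementary-bounds route reaches it with noticeably less computation and makes transparent why the result holds for all $\alpha,\beta>0$ with no restriction on $\beta$; the price is only the (harmless) case split on the sign of $\beta w+\beta-1$, which the paper's direct expansion avoids. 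Both arguments ultimately rest on the same Lemma \ref{lh1}, used by the paper in the form $1-e^{z}+ze^{z}\geq 0$ and by you in the equivalent form $1-e^{-u}\leq u$.
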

 \begin{proof}
 \normalfont The baseline distribution is Exponential of the form $F(\gamma x) = 1- e^{-\gamma x}, x>0$. Therefore the function $w(\gamma x) = e^{\gamma x} -1$.
 The distribution function of $X_{n:n}$ is given by
 \begin{equation}\label{baseexp}
 F_{X_{n:n}}(x) = \prod_{i=1}^n \left(1-e^{-\alpha(e^{\gamma_i x}-1)^{\beta}}\right), x>0, ~\alpha>0, ~\beta>0, ~\gamma_i >0 ~\forall ~i=1,2,\ldots, n.
 \end{equation}
 Differentiating $\eqref{baseexp}$ with respect to $\gamma_i$,
 \begin{align*}
 \dfrac{\partial F_{X_{n:n}}(x)}{\partial \gamma_i} &= \alpha \beta x F_{X_{n:n}}(x) \dfrac{(e^{\gamma_i x}-1)^{\beta -1}e^{\gamma_i x}}{e^{\alpha(e^{\gamma_i x}-1)^{\beta}}-1}\\
 & > 0.
 \end{align*}

 Let $\psi_1(\gamma) = \dfrac{(e^{\gamma x}-1)^{\beta -1}e^{\gamma x}}{e^{\alpha(e^{\gamma x}-1)^{\beta}}-1}$, differentiating $\psi_1(\gamma)$ with respect to $\gamma$, we obtain
 \begin{align*}
 \psi_1^{\prime}(\gamma) &= -\dfrac{x(e^{\gamma x}-1)^{\beta-2}e^{\gamma x}}{(e^{\alpha(e^{\gamma x}-1)^{\beta}}-1)^2}(((\alpha \beta (e^{\gamma x}-1)^{\beta}-\beta)e^{\gamma x}+1)e^{\alpha(e^{\gamma x}-1)^{\beta}}+\beta e^{\gamma x} -1)\\
 & \overset{sign}{=} - (((\alpha \beta (e^{\gamma x}-1)^{\beta}-\beta)e^{\gamma x}+1)e^{\alpha(e^{\gamma x}-1)^{\beta}}+\beta e^{\gamma x} -1), \gamma >0, x>0.
 \end{align*}

Let us assume $e^{\gamma x} =t$, then the above equation (ignoring the -ve sign )can be rewritten as,
$$\psi_2(t)= ((\alpha \beta (t-1)^{\beta}-\beta)t+1)e^{\alpha(t-1)^{\beta}}+\beta t -1, ~t>1,$$
also $\psi_2(1) = 0$. Differentiating $\psi_2(t)$ with respect to $t$,
$$\psi_2^{\prime}(t)= \dfrac{\beta}{t-1}((({\alpha}^2\beta(t-1)^{2\beta}+ \alpha(t-1)^{\beta}-1)t+1)e^{\alpha(t-1)^{\beta}}+t-1).$$
Again consider,
$$\psi_3(t) = (({\alpha}^2\beta(t-1)^{2\beta}+ \alpha(t-1)^{\beta}-1)t+1)e^{\alpha(t-1)^{\beta}}+t-1,$$
differentiating $\psi_3(t)$ we obtain
\begin{align*}
\psi_3^{\prime}(t)&= 1-e^{\alpha(t-1)^{\beta}}+ e^{\alpha(t-1)^{\beta}}(\alpha(t-1)^{\beta}+{\alpha}^2\beta(t-1)^{2\beta}+2{\alpha}^2{\beta}^2t(t-1)^{2\beta-1}+{\alpha}^2{\beta}^2t(t-1)^{3\beta -1}\\
& ~~+{\alpha}^2\beta t(t-1)^{2\beta -1}+\alpha\beta{(t-1)}^{\beta -1}).
\end{align*}
Let us consider $\phi(z) = 1-e^{z}+ z e^{z}$, where $z= \alpha(t-1)^{\beta}$. Using lemma 2.5, we observe that $\phi(z)>0 ~~\forall z >0$, i.e., $\psi_3(t) >0$. Consequently $\psi_2^{\prime}(t) >0$ and for $t>1$, $\psi_2(t) >0$. Hence $\psi_1^{\prime}(\gamma) <0$ for $\gamma >0$.
 Consider
 \begin{align*}
 \Delta &= (\gamma_i -\gamma_j)\left(\dfrac{\partial F_{X_{n:n}}(x)}{\partial \gamma_i} - \dfrac{\partial F_{X_{n:n}}(x)}{\partial \gamma_j}\right)\\
 &= \alpha \beta x F_{X_{n:n}}(x) (\gamma_i -\gamma_j)(\psi_1(\gamma_i)-\psi_1(\gamma_j))\\
 & \leq 0,
 \end{align*}
 i.e., $F_{X_{n:n}}(x)$ is increasing and Schur-concave function with respect to the parameter $\gamma_i$ $\forall i =1,2, \ldots, n$. Hence $\underline{\gamma} \prec^w \underline{\gamma}^* \Rightarrow F_{X_{n:n}}(x) \geq F_{Y_{n:n}}(x)$.
 \end{proof}

\section{Gompertz-Makeham distribution}
In this section we shall discuss the stochastic ordering results for Gompertz Makeham distribution. When the parameter $\lambda$ is kept constant and the remaining two parameters are varied in the sense of multivariate chain majorization we observe  a hazard rate ordering between $X_{1:2}$ and $Y_{1:2}$.
\subsection{Results for smallest ordered statistic with multivariate chain majorized components}
\begin{theorem}
\normalfont Let $X_1,X_2$ and $Y_1,Y_2$ be 2 pairs of independent random variable such that $X_i \sim GM(\alpha_i, \beta_i, \lambda)$ and $Y_i \sim GM(\alpha_i^*, \beta_i^*, \lambda)$ for $i=1,2$. Let $\begin{bmatrix}
\alpha_1 & \alpha_2\\
\beta_1 & \beta_2
\end{bmatrix} \in P_2$, then
$$\begin{bmatrix}
\alpha_1 & \alpha_2\\
\beta_1 & \beta_2
\end{bmatrix} \prec \prec \begin{bmatrix}
\alpha_1^* & \alpha_2^*\\
\beta_1^* & \beta_2^*
\end{bmatrix} \Rightarrow X_{1:2} \geq_{hr} Y _{1:2}.$$
\end{theorem}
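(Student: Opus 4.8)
The plan is to follow the template of the proof of Theorem \ref{thm1}, but the additive structure of the Gompertz--Makeham cumulative hazard makes the computation considerably cleaner. First I would express the reliability function of the minimum as the product of the two marginal survival functions. Since the $GM(\alpha,\beta,\lambda)$ survival function is $e^{-\lambda x-\frac{\alpha}{\beta}(e^{\beta x}-1)}$, independence gives
$$\overline{G}_{X_{1:2}}(x)=\exp\left(-2\lambda x-\sum_{i=1}^{2}\frac{\alpha_i}{\beta_i}\left(e^{\beta_i x}-1\right)\right).$$
Taking $-\ln$ and differentiating in $x$ cancels the $1/\beta_i$ factors and produces the strikingly simple hazard rate
$$r_{X_{1:2}}(x)=2\lambda+\alpha_1 e^{\beta_1 x}+\alpha_2 e^{\beta_2 x}.$$

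Next, for each fixed $x$ I would regard $r_{X_{1:2}}(x)$ as a function $\psi$ of the parameter matrix with $\alpha$-row and $\beta$-row, and verify the two hypotheses of Lemma 2.4. Invariance under column permutations is immediate, because interchanging the pairs $(\alpha_1,\beta_1)$ and $(\alpha_2,\beta_2)$ leaves $r_{X_{1:2}}$ unchanged. For the derivative condition I would form
$$\psi=(\alpha_1-\alpha_2)\left(\frac{\partial r_{X_{1:2}}}{\partial\alpha_1}-\frac{\partial r_{X_{1:2}}}{\partial\alpha_2}\right)+(\beta_1-\beta_2)\left(\frac{\partial r_{X_{1:2}}}{\partial\beta_1}-\frac{\partial r_{X_{1:2}}}{\partial\beta_2}\right)$$
and evaluate the four partials directly, namely $\partial r_{X_{1:2}}/\partial\alpha_i=e^{\beta_i x}$ and $\partial r_{X_{1:2}}/\partial\beta_i=\alpha_i x\,e^{\beta_i x}$, which gives
$$\psi=(\alpha_1-\alpha_2)\left(e^{\beta_1 x}-e^{\beta_2 x}\right)+x(\beta_1-\beta_2)\left(\alpha_1 e^{\beta_1 x}-\alpha_2 e^{\beta_2 x}\right).$$

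The only point requiring care is the sign of $\psi$, and the membership $(\underline{\alpha},\underline{\beta})\in P_2$ handles it. For the first summand, $e^{\beta x}$ is increasing in $\beta$ for $x>0$, so $e^{\beta_1 x}-e^{\beta_2 x}$ has the sign of $\beta_1-\beta_2$; the defining inequality $(\alpha_1-\alpha_2)(\beta_1-\beta_2)\geq 0$ of $P_2$ then forces this summand to be nonnegative. For the second summand I would use the symmetry of $P_2$: assuming without loss of generality $\beta_1\geq\beta_2$ forces $\alpha_1\geq\alpha_2$, so $\alpha_1 e^{\beta_1 x}\geq\alpha_2 e^{\beta_2 x}$ and both factors are nonnegative, the reverse ordering being identical. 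Hence $\psi\geq 0$ on $P_2$, and invoking Lemma 2.4 in its ``$\geq$'' orientation (together with the symmetry already noted) yields $r_{X_{1:2}}(x)\leq r_{Y_{1:2}}(x)$ for every $x$, i.e.\ $X_{1:2}\geq_{hr}Y_{1:2}$. I expect no genuine obstacle here: unlike Theorem \ref{thm1}, no convexity or third-derivative hypotheses on an auxiliary function $w$ are needed, since the exponential terms $e^{\beta_i x}$ already carry the required monotonicity and the whole argument collapses to the elementary sign check above.
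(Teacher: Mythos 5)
Your proposal is correct and follows essentially the same route as the paper's own proof: the same hazard rate $r_{X_{1:2}}(x)=2\lambda+\alpha_1e^{\beta_1 x}+\alpha_2e^{\beta_2 x}$, the same function $\psi$, and the same sign check on $P_2$ followed by an appeal to Lemma 2.4. Your justification of the sign of $\psi$ is in fact slightly more explicit than the paper's.
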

\begin{proof}
\normalfont The reliability function of the smallest ordered statistic $X_{1:2}$ is
\begin{equation}
\overline{F}_{X_{1:2}}(x) = e^{-2\lambda x}\prod_{i=1}^2 e^{-\dfrac{\alpha_i}{\beta_i}(e^{\beta_i x}-1)}, x> 0, \alpha_i>0, \beta_i >0, \lambda>0. 
\end{equation}
The corresponding hazard rate function is given by
\begin{equation}
r_{X_{1:2}}(x) = 2\lambda + \sum_{i=1}^2 \alpha_ie^{\beta_i x} , x>0.
\end{equation}
Consider 
\begin{align*}
\phi(\underline{\alpha},\underline{\beta}) &= (\alpha_1 - \alpha_2)\left(\dfrac{\partial r_{X_{1:2}}(x) }{\partial \alpha_1} - \dfrac{\partial r_{X_{1:2}}(x) }{\partial \alpha_2}\right) + (\beta_1 - \beta_2)\left(\dfrac{\partial r_{X_{1:2}}(x) }{\partial \beta_1}- \dfrac{\partial r_{X_{1:2}}(x) }{\partial \beta_2}\right)\\
&= (\alpha_1 - \alpha_2)(e^{\beta_1 x} - e^{\beta_2 x})+ x(\beta_1 -\beta_2)(\alpha_1e^{\beta_1 x} - \alpha_2e^{\beta_2 x})\\
& \geq 0.
\end{align*}
Since we have considered the parameter matrix $\begin{bmatrix}
\alpha_1 & \alpha_2\\
\beta_1 & \beta_2
\end{bmatrix} \in P_2$, then either $\alpha_1 > \alpha_2, \beta_1 > \beta_2$ or $\alpha_1 < \alpha_2, \beta_1 < \beta_2$. In both the cases the sign of $\phi(\underline{\alpha},\underline{\beta})$ is always positive. Hence we can conclude using Lemma 2.4 that $\begin{bmatrix}
\alpha_1 & \alpha_2\\
\beta_1 & \beta_2
\end{bmatrix} \prec \prec \begin{bmatrix}
\alpha_1^* & \alpha_2^*\\
\beta_1^* & \beta_2^*
\end{bmatrix} \Rightarrow r_{X_{1:2}} \leq r_{Y _{1:2}}$, i.e., $X_{1:2} \geq_{hr} Y _{1:2}$.

\end{proof}
As in Example 1, with the same set of parameters, we can observe the following example.
 \begin{example}
\normalfont Let $X_1, X_2$ be independent random variables such that $X_i \sim GM(\alpha_i, \beta_i, \lambda)$, $i=1,2$. Also let $Y_1, Y_2$ be another pair of independent random variable such that $Y_i \sim GM(\alpha_i^*, \beta_i^*, \lambda), i=1,2$. The parameters satisfy all the given conditions and are given in the form of matrices as
 \begin{equation*}
 \begin{bmatrix}
\alpha_1^* & \alpha_2^*\\
\beta_1^* & \beta_2^*
\end{bmatrix} = \begin{bmatrix}
4.8 & 3.4\\
2.5 & 1.6
\end{bmatrix} \in P_2 \text{ and } \begin{bmatrix}
\alpha_1 & \alpha_2\\
\beta_1 & \beta_2
\end{bmatrix} = \begin{bmatrix}
4.03 & 4.17\\
2.005 & 2.095
\end{bmatrix}.
 \end{equation*}
 The plot of the difference $r_{Y_{1:2}} - r_{X_{1:2}}$ is
\begin{figure}[h]
		\centering
			\includegraphics[scale=0.3]{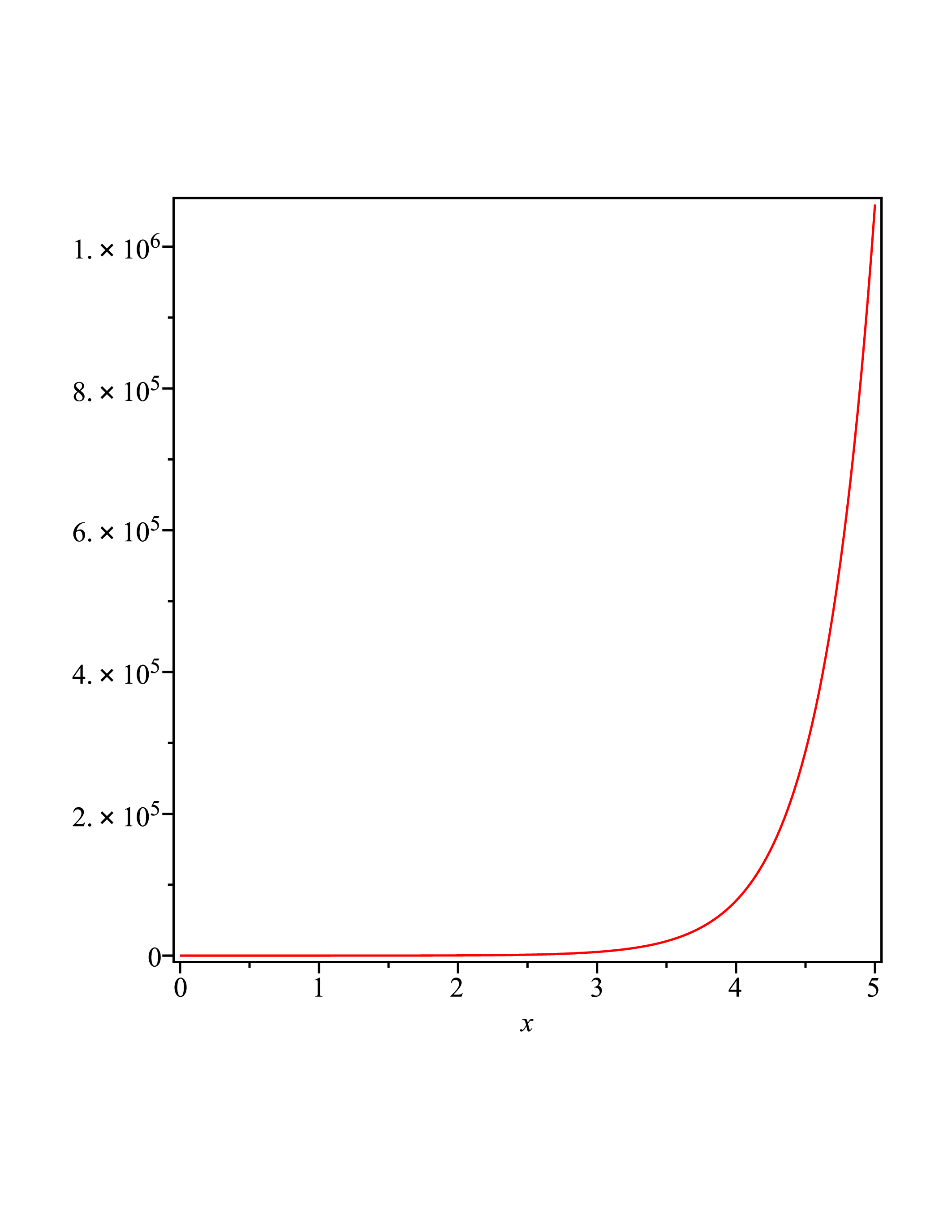}
			
			 \caption{The figure shows the graph of $r_{Y_{1:2}} - r_{X_{1:2}}$ for any value of $\lambda$.}
	\end{figure}
	
 \end{example}
The above theorem can be extended to n random variables using the next theorem.
\begin{theorem}
 \normalfont Let $X_1, \ldots, X_n$ be as set of independent random variables such that $X_i \sim GM(\alpha_i, \beta_i, \lambda), i=1,\ldots,n$. Also $Y_1,\ldots,Y_n$ be another set of random variable such that $Y_i \sim GM(\alpha_i^*, \beta_i^*, \lambda), i=1,\ldots,n$. If
 \begin{equation*}
 \begin{bmatrix}
 \alpha_1 & \ldots & \alpha_n\\
 \lambda_1 & \ldots & \lambda_n
 \end{bmatrix} \in P_n
 \end{equation*}
 and
 \begin{equation*}
 \begin{bmatrix}
 \alpha_1^* & \ldots & \alpha_n^*\\
 \lambda_1^* & \ldots & \lambda_n^*
 \end{bmatrix} = \begin{bmatrix}
 \alpha_1 & \ldots & \alpha_n\\
 \lambda_1 & \ldots & \lambda_n
 \end{bmatrix} T_{i,j}^{\delta},
 \end{equation*}
 then $X_{1:n} \geq_{hr} Y_{1:n}$.
 \end{theorem}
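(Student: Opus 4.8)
The plan is to reproduce, in the Gompertz--Makeham setting, the reduction that carried Theorem \ref{thm1} to Theorem \ref{thm2}. Since the $X_i$ are independent, the series system has an additive hazard rate, and for $GM(\alpha_i,\beta_i,\lambda)$ components it is
\[
r_{X_{1:n}}(x) = n\lambda + \sum_{i=1}^n \alpha_i e^{\beta_i x},\qquad x>0,
\]
with the analogous expression (same $n\lambda$ term, starred parameters) for $r_{Y_{1:n}}(x)$. The first step is to record that the single T-transform $T_{i,j}^{\delta}=\delta I_n+(1-\delta)\Pi_{i,j}$ acts as the identity on every column except the $i$-th and the $j$-th. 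Hence $\alpha_k^*=\alpha_k$ and $\beta_k^*=\beta_k$ for all $k\neq i,j$, so $X_k$ and $Y_k$ are identically distributed for those indices; the corresponding summands, together with the common $n\lambda$, cancel in the difference $r_{Y_{1:n}}(x)-r_{X_{1:n}}(x)$, leaving only the contributions of the two active columns.

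The second step is to verify that these two columns satisfy the hypotheses of the preceding (bivariate) Gompertz--Makeham theorem. The assumption that the full parameter matrix lies in $P_n$ yields $(\alpha_i-\alpha_j)(\beta_i-\beta_j)\geq 0$, so the submatrix $\left[\begin{smallmatrix}\alpha_i & \alpha_j\\ \beta_i & \beta_j\end{smallmatrix}\right]$ belongs to $P_2$; and restricting the relation (starred matrix) $=$ (unstarred matrix)$\,T_{i,j}^{\delta}$ to columns $i$ and $j$ exhibits a single $2\times2$ T-transform between the two submatrices, which is exactly the multivariate chain majorization required there. Applying the bivariate theorem to $(X_i,X_j)$ against $(Y_i,Y_j)$ then gives $\alpha_i e^{\beta_i x}+\alpha_j e^{\beta_j x}\leq \alpha_i^* e^{\beta_i^* x}+\alpha_j^* e^{\beta_j^* x}$. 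Adding back the identical remaining summands and the common $n\lambda$ produces $r_{X_{1:n}}(x)\leq r_{Y_{1:n}}(x)$ for all $x>0$, i.e. $X_{1:n}\geq_{hr}Y_{1:n}$.

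I expect no genuine analytic difficulty: the only delicate sign computation---that of $\phi(\underline{\alpha},\underline{\beta})$ on $P_2$---was already settled in the bivariate proof, and here it is invoked only through the two active coordinates. The one point needing care is the bookkeeping of the reduction, namely confirming that passing to the $i,j$ columns simultaneously preserves both membership in $P_2$ and the single-T-transform relation, so that the bivariate theorem genuinely applies; once this is in place, the additive structure of the series hazard rate transmits the two-component inequality to the full system automatically.
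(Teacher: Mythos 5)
Your proof is correct and takes essentially the same approach as the paper: the paper likewise observes that $T_{i,j}^{\delta}$ fixes every column other than the $i$-th and $j$-th, so those components of the two samples are identically distributed, and then invokes the bivariate Gompertz--Makeham theorem (Theorem 4.1). You merely make explicit the bookkeeping the paper leaves implicit --- the additive form of the series hazard rate, the cancellation of the common terms, and the check that the two active columns inherit both membership in $P_2$ and the $2\times 2$ T-transform relation.
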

 \begin{proof}
 \normalfont Proceeding in a similar way as in Theorem 3.2, the parameters $\alpha_k = \alpha_k^*$ and $\lambda_k = \lambda_k^*$ corresponding to the r.v. $X_k$ and $Y_k$ respectively $\forall k \neq i,j$. The T-transform matrix $T_{i,j}^{\delta} = \delta I_{n} + (1-\delta)\Pi_{i,j}$ where $\Pi_{i,j}$ interchanges the $i^{th}$ row with the $j^{th}$ row. Therefore the result follows from Theorem 4.1.
 \end{proof}
 
 \begin{theorem}
 \normalfont Let $X_1,\ldots, X_n$ be a set of independent random variables such that $X_i \sim GM(\alpha_i, \beta_i, \lambda), i=1,\ldots,n$. Also let $Y_1, \ldots, Y_n$ be another set of random variables such that $Y_i \sim GM(\alpha_i^*, \beta_i^*,\lambda), i=1,\ldots,n$. For $k\geq 2$, if 
 \begin{equation*}
 \begin{bmatrix}
 \alpha_1 & \ldots & \alpha_n\\
 \beta_1 & \ldots & \beta_n
 \end{bmatrix} \in P_n,
 \end{equation*}
 \begin{equation*}
 \begin{bmatrix}
 \alpha_1 & \ldots & \alpha_n\\
 \beta_1 & \ldots & \beta_n
 \end{bmatrix} T^{\delta_1}\ldots T^{\delta_i} \in P_n, \text{for } i=1,\ldots, k-1,
 \end{equation*}
 and
 \begin{equation*}
 \begin{bmatrix}
 \alpha_1^* & \ldots & \alpha_n^*\\
 \beta_1^* & \ldots & \beta_n^*
 \end{bmatrix} = \begin{bmatrix}
 \alpha_1 & \ldots & \alpha_n\\
 \beta_1 & \ldots & \beta_n
 \end{bmatrix} T^{\delta_1}\ldots T^{\delta_k},
 \end{equation*}
 then $X_{1:n} \geq_{hr} Y_{1:n}$.
 \end{theorem}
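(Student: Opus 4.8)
The plan is to reduce this chain-majorization statement to repeated use of the single-transform result, Theorem 4.2, exactly as the Weibull-G chain theorem (Theorem 3.3) was reduced to Theorem 3.2. First I would record the partial products as intermediate parameter matrices. Set
\[
\begin{bmatrix}
\alpha_1^{(j)} & \ldots & \alpha_n^{(j)}\\
\beta_1^{(j)} & \ldots & \beta_n^{(j)}
\end{bmatrix}
=
\begin{bmatrix}
\alpha_1 & \ldots & \alpha_n\\
\beta_1 & \ldots & \beta_n
\end{bmatrix}
T^{\delta_1}\ldots T^{\delta_j}, \qquad j=0,1,\ldots,k,
\]
with the empty product for $j=0$, so that the $j=0$ matrix is the original $(\underline{\alpha},\underline{\beta})$ matrix and the $j=k$ matrix is the starred one. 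To each of these I would associate independent random variables $Y_1^{(j)},\ldots,Y_n^{(j)}$ with $Y_i^{(j)} \sim GM(\alpha_i^{(j)},\beta_i^{(j)},\lambda)$, so that $Y_i^{(0)}$ and $X_i$ are identically distributed while $Y_i^{(k)}$ and $Y_i$ are identically distributed.

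The key point is that consecutive intermediate matrices differ by a single T-transform, since the $j$-th matrix is obtained from the $(j-1)$-th by right multiplication by $T^{\delta_j}$. By the standing hypothesis, the base matrix lies in $P_n$ and each partial product up to order $k-1$ lies in $P_n$; hence for every $j=1,\ldots,k$ the source matrix of the $j$-th step, namely the $(j-1)$-th intermediate matrix, belongs to $P_n$. This is precisely the hypothesis required by Theorem 4.2, so that result applies to each consecutive pair and gives $Y_{1:n}^{(j-1)} \geq_{hr} Y_{1:n}^{(j)}$ for each $j=1,\ldots,k$.

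It then remains to concatenate these one-step comparisons by the transitivity of the hazard rate order, yielding
\[
X_{1:n} = Y_{1:n}^{(0)} \geq_{hr} Y_{1:n}^{(1)} \geq_{hr} \ldots \geq_{hr} Y_{1:n}^{(k)} = Y_{1:n},
\]
which is the desired conclusion. The argument is essentially bookkeeping: all the analytic content sits inside each single-transform step and has already been established in Theorem 4.2 (ultimately in the two-component computation of Theorem 4.1). The only thing that genuinely has to be checked is that every matrix serving as a source in the chain lies in $P_n$, so that Theorem 4.2 is legitimately applicable at each stage; this is exactly what the hypothesis on the partial products guarantees, and I do not expect any obstacle beyond it.
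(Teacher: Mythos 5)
Your proposal is correct and follows essentially the same route as the paper: define the intermediate parameter matrices as partial products of the T-transforms, associate a sequence of random vectors $Y^{(j)}$ to them, apply Theorem 4.2 at each step (using the hypothesis that every partial product lies in $P_n$), and chain the resulting one-step hazard rate comparisons by transitivity.
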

 \begin{proof}
 \normalfont Consider 
 \begin{equation}
 \begin{bmatrix}
 \alpha_1^{(j)} & \ldots & \alpha_n^{(j)}\\
 \beta_1^{(j)} & \ldots & \beta_n^{(j)}
 \end{bmatrix} = \begin{bmatrix}
 \alpha_1 & \ldots & \alpha_n\\
 \beta_1 & \ldots & \beta_n
 \end{bmatrix} T^{\delta_1}\ldots T^{\delta_j}, j=1,\ldots, k-1.
 \end{equation}
 Let us assume $Y_1^{(j)},\ldots, Y_n^{(j)}, j=1,\ldots, k-1$, be sets of independent random variables with $Y_i^{(j)} \sim GM(\alpha_i^{(j)}, \beta_i^{(j)}, \lambda)$, $i=1,\ldots,n$ and $j=1,\ldots, k-1, k \geq 2$. Since 
$$\begin{bmatrix}
\alpha_1^{(j)} & \ldots & \alpha_n^{(j)}\\
 \beta_1^{(j)} & \ldots & \beta_n^{(j)}
 \end{bmatrix} \in P_n, \text{ for } j=1,\ldots, k-1.$$
 Thus using Theorem 4.2 repeatedly we obtain $X_{1:n} \geq_{hr} Y_{1:n}^{(1)} \geq_{hr} \ldots, Y_{1:n}^{(k-1)} \geq_{hr} Y_{1:n}$.
 \end{proof}
\subsection{When the components are chain majorized}
In this section we shall observe the stochastic behaviour of GM distributed components when only one parameter is varied and all the other parameters are kept constant. Firstly we shall observe the behaviour of the parameter $\lambda$ in the sense of usual stochastic ordering.
\begin{theorem}
\normalfont Let $X_1,X_2, \ldots,X_n$ be a set of independent random variables such that
$X_i \sim GM(\alpha,\beta,\lambda_i)$ for $i = 1, 2, \ldots, n$. Let $Y_1, Y_2, \ldots , Y_n$ be another set of independent random
variables such that $Y_i \sim GM(\alpha,\beta, \lambda_i^* )$ for $i = 1, 2, \ldots, n$. Then $\underline{\lambda} \prec^{w} \underline{\lambda}^* \Rightarrow X_{1:n} =_{st} Y_{1:n}$.
\end{theorem}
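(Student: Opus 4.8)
The plan is to exploit a degeneracy special to the minimum: for independent $GM$ random variables sharing the common parameters $\alpha$ and $\beta$, the reliability function of $X_{1:n}$ depends on the heterogeneous parameters only through their sum. First I would write the survival function of $X_{1:n}$ using independence, namely
$$\overline{F}_{X_{1:n}}(x) = \prod_{i=1}^n \overline{F}_{X_i}(x) = \prod_{i=1}^n e^{-\lambda_i x - \frac{\alpha}{\beta}(e^{\beta x}-1)} = e^{-\left(\sum_{i=1}^n \lambda_i\right)x - \frac{n\alpha}{\beta}(e^{\beta x}-1)}.$$
The key structural observation is that each $\lambda_i$ enters only through the linear term $-\lambda_i x$, so on forming the product these parameters collapse into the single quantity $\sum_{i=1}^n \lambda_i$; the remaining factor $e^{-\frac{n\alpha}{\beta}(e^{\beta x}-1)}$ is identical for the $X$- and $Y$-systems, since $\alpha$ and $\beta$ are common and unchanged.

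Next I would reduce the stochastic comparison to a comparison of the two sums. Writing the analogous expression for $\overline{F}_{Y_{1:n}}(x)$ and using the monotonicity of the exponential, for every $x>0$ one has $\overline{F}_{X_{1:n}}(x) \leq \overline{F}_{Y_{1:n}}(x)$ precisely when $\sum_{i=1}^n \lambda_i \geq \sum_{i=1}^n \lambda_i^*$. At this point I would invoke the hypothesis $\underline{\lambda} \prec^w \underline{\lambda}^*$: taking $k=n$ in the definition of weak supermajorization gives $\sum_{i=1}^n \lambda_{i:n} \geq \sum_{i=1}^n \lambda_{i:n}^*$, i.e. $\sum_{i=1}^n \lambda_i \geq \sum_{i=1}^n \lambda_i^*$. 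This is exactly the inequality needed, and it yields $X_{1:n} \leq_{st} Y_{1:n}$ directly from the definition of the usual stochastic order.

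There is in fact no analytic obstacle here, which is itself the point worth emphasizing: because the law of the minimum degenerates to a function of the single statistic $\sum_i \lambda_i$, no Schur-convexity argument or differentiation (as in the earlier theorems of this section) is required. The only subtlety I would flag concerns the direction of the conclusion. Weak supermajorization delivers only $\sum_i \lambda_i \geq \sum_i \lambda_i^*$, so the sharp and provable statement is the one-sided order $X_{1:n} \leq_{st} Y_{1:n}$. Genuine equality in distribution, $X_{1:n} =_{st} Y_{1:n}$, would require $\sum_i \lambda_i = \sum_i \lambda_i^*$, which holds under full majorization $\underline{\lambda} \prec \underline{\lambda}^*$ but not under $\prec^w$ in general; I would therefore either strengthen the hypothesis to $\prec$ or, keeping the hypothesis $\prec^w$, weaken the conclusion to $\leq_{st}$.
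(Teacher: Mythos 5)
Your argument is correct and rests on the same structural fact as the paper's proof: the survival function of the minimum is $\overline{F}_{X_{1:n}}(x)=e^{-\left(\sum_{i}\lambda_i\right)x}\,e^{-\frac{n\alpha}{\beta}(e^{\beta x}-1)}$, so $\underline{\lambda}$ enters only through its sum. The paper gets there less directly: it differentiates in the $\lambda_i$, observes that the Schur criterion $(\lambda_i-\lambda_j)\bigl(\partial\overline{F}/\partial\lambda_i-\partial\overline{F}/\partial\lambda_j\bigr)$ vanishes identically while $\overline{F}$ is decreasing in each $\lambda_i$, and then appeals to the Schur-convexity lemmas; you bypass that machinery by invoking the $k=n$ case of weak supermajorization to compare $\sum_i\lambda_i$ with $\sum_i\lambda_i^*$, which is more elementary and makes the degeneracy transparent. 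Your closing caveat is the more important point, and it is well taken: under $\prec^w$ one only has $\sum_i\lambda_i\geq\sum_i\lambda_i^*$, so both your argument and the paper's own (decreasing plus Schur-convex, via Lemma \ref{lemma3}) deliver $X_{1:n}\leq_{st}Y_{1:n}$ rather than the stated $X_{1:n}=_{st}Y_{1:n}$. Equality in distribution requires $\sum_i\lambda_i=\sum_i\lambda_i^*$, which holds under full majorization $\prec$ but not under $\prec^w$ in general; for instance $(2,2)\prec^w(1,1)$ yields $\overline{F}_{X_{1:2}}(x)<\overline{F}_{Y_{1:2}}(x)$ for all $x>0$. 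The theorem should be repaired exactly as you propose, either by strengthening the hypothesis to $\prec$ or by weakening the conclusion to $\leq_{st}$.
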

\begin{proof}
\normalfont The survival function of $X_{1:n}$ is 
\begin{equation}
\overline{F}_{X_{1:n}}(x) = e^{-\left(\displaystyle\sum_{i=1}^n \lambda_i\right) x} e^{-n \dfrac{\alpha}{\beta}(e^{\beta x}-1)}, x> 0, \alpha>0, \beta >0, \lambda_i>0.
\end{equation}
Consider $\phi_1(\underline{\lambda}) = (\lambda_i -\lambda_j)\left(\dfrac{\partial \overline{F}_{X_{1:n}}(x)}{\partial \lambda_i} - \dfrac{\partial \overline{F}_{X_{1:n}}(x)}{\partial \lambda_j}\right)$, where $i \neq j$. Now $\dfrac{\partial \overline{F}_{X_{1:2}}(x)}{\partial \lambda_i} = -x \overline{F}_{X_{1:2}}(x)$. We can thus observe that $\phi_1(\underline{\lambda}) =0$ for every $\alpha >0, \beta>0$ and $\overline{F}_{X_{1:2}}(x)$ is decreasing with respect to each $\lambda_i$, $i=1,2,\ldots,n$. Using lemma 2.1 the result follows.
\end{proof}

In a similar manner, we have obtained the stochastic comparison results for the maximum ordered statistic, $X_{n:n}$ also. The following theorem  presents a usual stochastic ordering between $X_{n:n}$ and $Y_{n:n}$.
\begin{theorem}
\normalfont Let $X_1,X_2, \ldots,X_n$ be a set of independent random variables such that
$X_i \sim GM(\alpha_i,\beta,\lambda)$ for $i = 1, 2, \ldots, n$. Let $Y_1, Y_2, \ldots , Y_n$ be another set of independent random
variables such that $Y_i \sim GM(\alpha_i^*,\beta, \lambda)$ for $i = 1, 2, \ldots, n$. Then $\underline{\alpha} \prec^{w} \underline{\alpha}^* \Rightarrow X_{n:n} \leq_{st} Y_{n:n}$.
\end{theorem}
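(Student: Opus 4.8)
The plan is to follow the same template as the preceding largest-order-statistic theorems: express the distribution function of $X_{n:n}$ as a symmetric function of the parameter vector $\underline{\alpha}$, show that it is increasing in each coordinate and Schur-concave, and then transfer the weak supermajorization $\underline{\alpha} \prec^w \underline{\alpha}^*$ into the inequality $F_{X_{n:n}}(x) \geq F_{Y_{n:n}}(x)$ by applying Lemma 2.3 to $-F_{X_{n:n}}$. Since $F_{X_{n:n}}(x) \geq F_{Y_{n:n}}(x)$ is equivalent to $\overline{F}_{X_{n:n}}(x) \leq \overline{F}_{Y_{n:n}}(x)$, this is exactly the assertion $X_{n:n} \leq_{st} Y_{n:n}$.

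First I would record the distribution function of the largest order statistic,
$$F_{X_{n:n}}(x) = \prod_{i=1}^n \left(1 - e^{-\lambda x - \dfrac{\alpha_i}{\beta}(e^{\beta x}-1)}\right), \quad x>0,$$
and abbreviate $c = \dfrac{e^{\beta x}-1}{\beta} > 0$. Taking logarithms and differentiating, only the $i$-th factor depends on $\alpha_i$, so
$$\dfrac{\partial F_{X_{n:n}}(x)}{\partial \alpha_i} = F_{X_{n:n}}(x)\,\psi(\alpha_i), \qquad \psi(\alpha) = \dfrac{c\, e^{-\lambda x - \alpha c}}{1 - e^{-\lambda x - \alpha c}}.$$
This is strictly positive because $c>0$, so $F_{X_{n:n}}$ is increasing in each $\alpha_i$, and it is manifestly symmetric in $\alpha_1, \ldots, \alpha_n$.

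Next I would establish Schur-concavity through the derivative criterion in Lemma 2.1. Because
$$(\alpha_i - \alpha_j)\left(\dfrac{\partial F_{X_{n:n}}(x)}{\partial \alpha_i} - \dfrac{\partial F_{X_{n:n}}(x)}{\partial \alpha_j}\right) = F_{X_{n:n}}(x)\,(\alpha_i - \alpha_j)(\psi(\alpha_i) - \psi(\alpha_j))$$
and $F_{X_{n:n}}(x) \geq 0$, the sign is governed entirely by the monotonicity of $\psi$. Substituting $u = e^{-\lambda x - \alpha c}$, which is decreasing in $\alpha$, gives $\psi(\alpha) = c\,\dfrac{u}{1-u}$; since $t \mapsto \dfrac{t}{1-t}$ is increasing on $(0,1)$, $\psi$ is decreasing in $\alpha$. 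Hence the displayed quantity is $\leq 0$, and $F_{X_{n:n}}$ is Schur-concave on $(0,\infty)^n$.

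Finally, set $f = -F_{X_{n:n}}$. Then $f$ is decreasing in each coordinate and Schur-convex, so Lemma 2.3 applied under $\prec^w$ yields $f(\underline{\alpha}) \leq f(\underline{\alpha}^*)$, that is, $F_{X_{n:n}}(x) \geq F_{Y_{n:n}}(x)$ for all $x>0$, which gives $X_{n:n} \leq_{st} Y_{n:n}$. I do not anticipate a serious obstacle; the one point requiring care is the monotonicity of $\psi$ (equivalently, that the per-component contribution to $F_{X_{n:n}}$ is decreasing), since this is what drives the Schur-concavity, and it must be checked in the correct direction so that the weak supermajorization reverses into the stated usual stochastic order rather than its opposite.
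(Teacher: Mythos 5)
Your proposal is correct and follows essentially the same route as the paper: the same expression for $F_{X_{n:n}}$, the same per-coordinate derivative (your $\psi(\alpha)$ equals $c$ times the paper's $p(\alpha)$), the same Schur-concavity-plus-monotonicity argument, and the same application of Lemma 2.3 to $-F_{X_{n:n}}$. No gaps.
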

\begin{proof}
\normalfont The distribution function of $X_{n:n}$ is
\begin{equation}
F_{X_{n:n}}(x)= \prod_{i=1}^n \left( 1- e^{-\lambda x - \dfrac{\alpha_i}{\beta}(e^{\beta x}-1)} \right), x >0, \alpha_i >0, \beta>0, \lambda >0.
\end{equation}
Consider $\phi_2(\underline{\alpha}) = (\alpha_i -\alpha_j)\left(\dfrac{\partial F_{X_{n:n}}(x)}{\partial \alpha_i} - \dfrac{\partial F_{X_{n:n}}(x)}{\partial \alpha_j}\right)$, for $i \neq j$. Now, 
$$\dfrac{\partial F_{X_{n:n}}(x)}{\partial \alpha_i} = \dfrac{1}{\beta} \dfrac{F_{X_{n:n}}(x) (e^{\beta x}-1)}{e^{\lambda x + \dfrac{\alpha_i}{\beta}(e^{\beta x}-1)}-1}.$$
Thus, $$\phi_2(\underline{\alpha}) = \dfrac{1}{\beta}(e^{\beta x}-1)F_{X_{n:n}}(x)(\alpha_i -\alpha_j)\left(p(\alpha_i) - p(\alpha_j)\right), \alpha_i \neq \alpha_j$$ where $p(\alpha) = \dfrac{1}{e^{\lambda x+\dfrac{\alpha}{\beta}(e^{\beta x}-1)}-1}$.\\
 We observe that the quantity $p(\alpha)$ is decreasing with respect to $\alpha$, since\\
  $p^{\prime}(\alpha) = -\left(e^{\lambda x+\dfrac{\alpha}{\beta}(e^{\beta x}-1)}-1\right)^{-2}\dfrac{(e^{\beta x}-1)}{\beta}e^{\lambda x + \dfrac{\alpha}{\beta}(e^{\beta x}-1)}$. Consequently, $\phi_2(\underline{\alpha}) \leq 0$. Hence using lemma 2.1, $F_{X_{n:n}}(x)$ is Schur-concave with respect to $\underline{\alpha} = (\alpha_1,\alpha_2,\ldots,\alpha_n)$. Also, $F_{X_{n:n}}(x)$ is increasing w.r.t each $\alpha_i$, $i=1,2,\ldots,n$. In other words $-F_{X_{n:n}}(x)$ is Schur-convex and decreasing with respect to $\underline{\alpha}$. Using Lemma 2.3, we observe that $\underline{\alpha} \prec^{w} \underline{\alpha}^* \Rightarrow F_{X_{n:n}}(x) \geq_{st} F_{Y_{n:n}}(x)$ and the result follows.
\end{proof}
 \section{Conclusion}
 We have observed the following results:
  Let $X_1, X_2, \ldots, X_n$ be independent random variables with $X_i \sim W-G(\alpha_i, \beta_i, \gamma_i), i=1,2,\ldots, n$. Furthermore, let $Y_1, Y_2, \ldots, Y_n$ be another set of independent random variables with $Y_i \sim W-G(\alpha_i^*, \beta_i^*, \gamma_i^*), i=1,2,\ldots, n$. When $\beta_1 = \beta_2 = \ldots = \beta_n= \beta_1^* = \beta_2^* = \ldots = \beta_n^*$ and the matrix containing the parameters $\alpha_i, \gamma_i$ changes to another matrix containing the parameters $\alpha_i^*, \gamma_i^*, i=1,2, \ldots, n $ in the sense of multivariate chain majorization, we study the hazard rate ordering of the smallest ordered statistic.\\
  Next, when $\beta_1 = \beta_2 = \ldots = \beta_n= \beta_1^* = \beta_2^* = \ldots = \beta_n^*$ and $\gamma_1 = \gamma_2 = \ldots = \gamma_n= \gamma_1^* = \gamma_2^* = \ldots = \gamma_n^*$ and $(\alpha_1, \alpha_2,\ldots, \alpha_n) \prec_{w} (\alpha_1^*, \alpha_2^*,\ldots, \alpha_n^*)$, we establish reversed hazard rate ordering of the largest ordered statistic. Also when $\alpha_1 = \alpha_2 = \ldots = \alpha_n= \alpha_1^* = \alpha_2^* = \ldots = \alpha_n^*$, $\beta_1 = \beta_2 = \ldots = \beta_n= \beta_1^* = \beta_2^* = \ldots = \beta_n^*$ and $(\gamma_1, \gamma_2, \ldots, \gamma_n) \prec^w (\gamma_1^*, \gamma_2^*, \ldots, \gamma_n^*)$, we observe the usual stochastic ordering of the largest ordered statistic when the baseline distribution of $X$ is Exponential.\\
  A similar set of random variables following Gompertz Makeham distribution $(X_i \sim GM(\alpha_i, \beta_i, \lambda_i) \text{ and } Y_i \sim GM(\alpha_i^*, \beta_i^*, \lambda_i^*))$ are considered. We observed hazard rate ordering for the sample minimum when the parameters $\lambda_1 =\lambda_2=\ldots, \lambda_n = \lambda_1^* =\lambda_2^*=\ldots, \lambda_n^* = \lambda$ and the other remaining parameters are related in the sense of multivariate chain majorization. Whereas usual stochastic ordering has been observed for the sample maximums when the parameters $\beta_1=\beta_2=\ldots, \beta_n = \beta_1^*=\beta_2^*=\ldots, \beta_n^*$ and $\lambda_1=\lambda_2=\ldots, \lambda_n = \lambda_1^*=\lambda_2^*=\ldots, \lambda_n^*$ while the parameter $\alpha$ is only varied using vector majorization. Also we observe that the age independent parameter $\lambda$ has absolutely no effect on the ordered statistics. 
  
 \section*{Acknowledgments}
  The first author would like to thank IIT Kharagpur for research assistantship.

{\small
}

\end{document}